\numberwithin{equation}{section}
\title{Lower bounds on mapping content and quantitative factorization through trees}
\date{\today}
\author{Guy C. David}
\address{Department of Mathematical Sciences\\ Ball State University, Muncie, IN 47306}
\email{gcdavid@bsu.edu}
\author{Raanan Schul}
\address{Department of Mathematics\\ Stony Brook University\\ Stony Brook, NY 11794-3651}
\email{schul@math.sunysb.edu}
\thanks{G.~ C.~ David was partially supported by the National Science Foundation under Grants No. DMS-1758709 and DMS-2054004. R.~ Schul was partially supported by the National Science Foundation under Grant No. DMS-1763973.}
\subjclass[2010]{28A75, 53C23, 30L99.}
\begin{document}
\maketitle

\theoremstyle{plain}
\newtheorem{theorem}{Theorem}
\newtheorem{exercise}{Exercise}
\newtheorem{corollary}[theorem]{Corollary}
\newtheorem{scholium}[theorem]{Scholium}
\newtheorem{claim}[theorem]{Claim}
\newtheorem{lemma}[theorem]{Lemma}
\newtheorem{sublemma}[theorem]{Lemma}
\newtheorem{proposition}[theorem]{Proposition}
\newtheorem{conjecture}[theorem]{Conjecture}
\newtheorem{maintheorem}{Theorem}
\newtheorem{maincor}[maintheorem]{Corollary}
\newtheorem{mainproposition}[maintheorem]{Proposition}
\renewcommand{\themaintheorem}{\Alph{maintheorem}}

\theoremstyle{definition}
\newtheorem{fact}[theorem]{Fact}
\newtheorem{example}[theorem]{Example}
\newtheorem{definition}[theorem]{Definition}
\newtheorem{remark}[theorem]{Remark}
\newtheorem{question}[theorem]{Question}

\numberwithin{equation}{section}
\numberwithin{theorem}{section}

\newcommand{\cG}{\mathcal{G}}
\newcommand{\RR}{\mathbb{R}}
\newcommand{\HH}{\mathcal{H}}
\newcommand{\LIP}{\textnormal{LIP}}
\newcommand{\Lip}{\textnormal{Lip}}
\newcommand{\Tan}{\textnormal{Tan}}
\newcommand{\length}{\textnormal{length}}
\newcommand{\dist}{\textnormal{dist}}
\newcommand{\diam}{\textnormal{diam}}
\newcommand{\vol}{\textnormal{vol}}
\newcommand{\rad}{\textnormal{rad}}
\newcommand{\side}{\textnormal{side}}

\def\bA{{\mathbb{A}}}
\def\bB{{\mathbb{B}}}
\def\bC{{\mathbb{C}}}
\def\bD{{\mathbb{D}}}
\def\bR{{\mathbb{R}}}
\def\bS{{\mathbb{S}}}
\def\bO{{\mathbb{O}}}
\def\bE{{\mathbb{E}}}
\def\bF{{\mathbb{F}}}
\def\bH{{\mathbb{H}}}
\def\bI{{\mathbb{I}}}
\def\bT{{\mathbb{T}}}
\def\bZ{{\mathbb{Z}}}
\def\bX{{\mathbb{X}}}
\def\bP{{\mathbb{P}}}
\def\bN{{\mathbb{N}}}
\def\bQ{{\mathbb{Q}}}
\def\bK{{\mathbb{K}}}
\def\bG{{\mathbb{G}}}

\def\nrj{{\mathcal{E}}}
\def\cA{{\mathscr{A}}}
\def\cB{{\mathscr{B}}}
\def\cC{{\mathscr{C}}}
\def\cD{{\mathscr{D}}}
\def\cE{{\mathscr{E}}}
\def\cF{{\mathscr{F}}}
\def\cB{{\mathscr{G}}}
\def\cH{{\mathcal{H}}}
\def\cI{{\mathscr{I}}}
\def\cJ{{\mathscr{J}}}
\def\cK{{\mathscr{K}}}
\def\Layer{{\rm Layer}}
\def\cM{{\mathscr{M}}}
\def\cN{{\mathscr{N}}}
\def\cO{{\mathscr{O}}}
\def\cP{{\mathscr{P}}}
\def\cQ{{\mathscr{Q}}}
\def\cR{{\mathscr{R}}}
\def\cS{{\mathscr{S}}}
\def\Up{{\rm Up}}
\def\cU{{\mathscr{U}}}
\def\cV{{\mathscr{V}}}
\def\cW{{\mathscr{W}}}
\def\cX{{\mathscr{X}}}
\def\cY{{\mathscr{Y}}}
\def\cZ{{\mathscr{Z}}}

  \def\del{\partial}
  \def\diam{{\rm diam}}
	\def\VV{{\mathcal{V}}}
	\def\FF{{\mathcal{F}}}
	\def\QQ{{\mathcal{Q}}}
	\def\BB{{\mathcal{B}}}
	\def\XX{{\mathcal{X}}}
	\def\PP{{\mathcal{P}}}

  \def\del{\partial}
  \def\diam{{\rm diam}}
	\def\image{{\rm Image}}
	\def\domain{{\rm Domain}}
  \def\dist{{\rm dist}}
	\newcommand{\Gr}{\mathbf{Gr}}
\newcommand{\md}{\textnormal{md}}
\newcommand{\vspan}{\textnormal{span}}

\newcommand{\RS}[1]{{  \color{blue} \textbf{Raanan:} #1}}
\newcommand{\GCD}[1]{{  \color{red} \textbf{Guy:} #1}}

\begin{abstract}
We give a simple quantitative condition, involving the ``mapping content'' of Azzam--Schul, that implies that a Lipschitz map from a Euclidean space to a metric space must be close to factoring through a tree. Using results of Azzam--Schul and the present authors, this gives simple checkable conditions for a Lipschitz map to have a large piece of its domain on which it behaves like an orthogonal projection. The proof involves new lower bounds and continuity statements for mapping content, and relies on a ``qualitative'' version of the main theorem recently proven by Esmayli--Haj\l asz.
\end{abstract}

\maketitle

\section{Introduction}

The goal of this paper is to explain a link between two questions that one can ask about a Lipschitz mapping 
from a Euclidean space into a metric space. We first describe these questions philosophically, and then give precise details.

The first question is: Are there suitable coordinates on which $f$ looks like a simple orthogonal projection? 
From a geometric perspective, the nice properties of such a projection are that the fibers of the mapping are parallel and are parametrized in a uniform way by Euclidean spaces of the expected dimension. 

As one cannot expect every Lipschitz map to look like a projection in coordinates, we ask instead:
\begin{question}\label{q:vague1}
Are there \textit{large pieces} of the domain on which the restriction of $f$ admits such a change of coordinates?
\end{question}
In other words, we are looking for a large piece of the domain on which we have a (non-smooth) implicit function theorem. This question was explored in detail recently in \cite{AS12, HZ, David-Schul:harder-sarder}, as we make precise and explain further below.

It turns out that there are highly non-trivial mappings -- first discovered by Kaufman \cite{Ka} -- that fail to admit any such pieces at all. Kaufman's example is a Lipschitz map of $[0,1]^3$ onto $[0,1]^2$ that nonetheless fails in a precise way to behave like an orthogonal projection $\RR^3\rightarrow\RR^2$ on \textit{any} set of positive measure.

Kaufman's example provides the link to our second question, because it \textit{factors through a tree}.

\begin{definition}
A \textit{metric tree} is a geodesic metric space in which every geodesic triangle is isometric to a tripod.

We say that a Lipschitz map $f\colon X \rightarrow Y$ \textit{factors through a tree} if there is a metric tree $T$ and two Lipschitz maps $g\colon X \rightarrow T$ and $h\colon T \rightarrow Y$ such that $f=h \circ g$.

If the two maps $h$ and $g$ are both $L$-Lipschitz, we say that $f$ \textit{factors through a tree by $L$-Lipschitz maps}.
\end{definition}

Thus, the second question we ask about a given Lipschitz map $f$ is:

\begin{question}\label{q:vague2}
How can we tell whether $f$ factors through a tree, or (taking the quantitative viewpoint) is close to factoring through a tree? 
\end{question}

Our initial motivation for this question was to understand whether Kaufman-type examples are the only ones that can provide negative answers to Question \ref{q:vague1} in low-dimensional situations.

Maps that factor through trees are degenerate in some topological ways. As a simple illustration, the restriction of such a map to any Jordan curve in the domain must fail to be injective. Nonetheless, such mappings can exhibit unexpected geometric complexity, as shown by Kaufman's example, whose behavior is quite surprising in view of the one-dimensionality of trees. Further examples of mappings that factor through trees and nonetheless have ``large'' images appear in \cite[Section 2]{AS12}.

It turns out that the answers to both Questions \ref{q:vague1} and \ref{q:vague2} are governed by the same quantity, the so-called ``$(n,m)$-mapping content'' first introduced in \cite{AS12}. 

\begin{definition}\label{def:mappingcontent}
Let $n,m$ be non-negative integers and let $E\subseteq Q_0 = [0,1]^{n+m}$ be a set. We define the ``$(n,m)$-mapping content'' of $f$ on $E$  as
$$\cH^{n,m}_\infty(f,E):= \inf \sum_{Q_i} \cH^n_\infty(f(Q_i))\side(Q_i)^m,$$
where the infimum is taken over all coverings $\{Q_i\}$ of $E$ by dyadic cubes with disjoint interiors in $Q_0$.
\end{definition}
Here $\cH^n_\infty$ refers to the $n$-dimensional Hausdorff content, defined in subsection \ref{subsec:content}.
\begin{remark}
We note that this definition of $\cH^{n,m}_\infty$ is precisely that used in \cite{David-Schul:harder-sarder}. It differs slightly from that stated in \cite[Equation (1.3)]{AS12} in that the infimum is over coverings by dyadic cubes rather than arbitrary cubes, although the results in \cite{AS12} really only require dyadic cubes. The two definitions are weakly comparable, in the sense that if one quantity is small than the other is small; this follows from \cite[Corollary E]{David-Schul:harder-sarder}.
\end{remark}

The quantity $\cH^{n,m}_\infty$ serves in some sense as a ``coarse'' (and metric) analog of the $L^1$-norm of the $n$-dimensional Jacobian of $f$, defined as the product of the $n$ largest singular values of the derivative of $f$. Thus, a mapping with small $\HH^{n,m}_\infty$ might be viewed as highly compressing $n$-dimensional volumes on some scales. This must be interpreted with care, however, given Kaufman's example, which has $\HH^{2,1}_\infty = 0$ but maps onto the unit square $[0,1]^2$. This is discussed further in \cite[Section 3]{David-Schul:harder-sarder}. 

When $n=2$, the present authors made the following conjecture in \cite[Conjecture 1.13]{David-Schul:harder-sarder}:
\begin{conjecture}[\cite{David-Schul:harder-sarder}]\label{conj:21}
Let $Q_0=[0,1]^{2+m}$ and let $f\colon Q_0 \rightarrow \ell^\infty$ be a $1$-Lipschitz mapping.
\begin{itemize}
\item (Qualitative version) If $\HH^{2,m}_\infty(f,Q_0)=0$, then $f$ factors through a tree.

\item (Quantitative version) For every $\epsilon>0$, there is a $\delta=\delta(\epsilon,m)$ with the following property: If $\HH^{2,m}_\infty(f,Q_0)<\delta$, then there is a $1$-Lipschitz map $g\colon Q_0 \rightarrow \ell^\infty$ that factors through a tree and satisfies $\sup_{Q_0}\|g-f\|_\infty<\epsilon$.
\end{itemize}

\end{conjecture}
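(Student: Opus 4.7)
The plan is to establish the qualitative version first, and then deduce the quantitative version from it by a compactness argument, using a continuity statement for $\HH^{2,m}_\infty$ as the bridge.

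For the qualitative version, the natural route is to combine a characterization of tree-factoring Lipschitz maps (the Esmayli--Haj\l asz theorem referenced in the abstract) with a lower bound on $\HH^{2,m}_\infty$. The heuristic is that if $f$ fails to factor through a tree, then $f$ must genuinely separate some $2$-dimensional family inside $Q_0$, and this separation ought to force a positive contribution to $\HH^{2,m}_\infty(f, Q_0)$ via a Fubini-type slicing argument over the transverse $m$-dimensional directions. Contrapositively, $\HH^{2,m}_\infty(f, Q_0) = 0$ rules out any such separating family, and the Esmayli--Haj\l asz criterion then supplies the factorization. The real work is to pull their criterion, which is naturally phrased in terms of images and fibers of $f$, into the cube-based quantity of Definition~\ref{def:mappingcontent}.

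For the quantitative version, I would argue by contradiction. If the conclusion fails for some $\epsilon > 0$, there is a sequence of $1$-Lipschitz maps $f_k\colon Q_0 \to \ell^\infty$ with $\HH^{2,m}_\infty(f_k, Q_0) \to 0$ but $\|g - f_k\|_\infty \ge \epsilon$ for every $1$-Lipschitz tree-factoring $g$. After translating so that $f_k(0) = 0$, the images lie in a common bounded ball of $\ell^\infty$, and a standard extraction (passing to pointwise subsequential limits of the pullback pseudo-metrics $d_k(x,y) = \|f_k(x) - f_k(y)\|_\infty$ and re-embedding the resulting quotient into $\ell^\infty$) yields a $1$-Lipschitz $f_\infty\colon Q_0 \to \ell^\infty$ with $f_k \to f_\infty$ uniformly after suitable isometric identifications. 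Continuity of $\HH^{2,m}_\infty$ under uniform convergence then gives $\HH^{2,m}_\infty(f_\infty, Q_0) = 0$, the qualitative version supplies a tree factorization of $f_\infty$, and setting $g = f_\infty$ contradicts the choice of $f_k$ for large $k$.

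The main obstacle is the qualitative step: extracting a genuine tree structure from the purely metric vanishing hypothesis. This is exactly where the Esmayli--Haj\l asz theorem enters, and the lower bounds on mapping content that the paper develops are what connect that theorem to the content hypothesis. A secondary difficulty is proving the required continuity of $\HH^{2,m}_\infty$: since $\HH^n_\infty$ of a set is only upper semicontinuous under Hausdorff convergence, obtaining the correct direction of control at the level of mapping content is not automatic and is presumably the motivation for the ``new continuity statements'' mentioned in the abstract.
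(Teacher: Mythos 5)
Your proposal is correct and takes essentially the same approach as the paper: the qualitative version is cited directly from Esmayli--Haj\l asz (Theorem~\ref{thm:EGH}, which is already phrased in terms of $\HH^{2,m}_\infty$, so no translation into the cube-based quantity is actually needed), and the quantitative version is proved exactly by your compactness-plus-continuity contradiction argument, via the Gromov--Hausdorff extraction of Lemma~\ref{lem:GHcompact} and the continuity statement Theorem~\ref{thm:contentzero}. You also correctly identify the crux: the continuity of $\HH^{2,m}_\infty$ is not automatic, and the paper establishes it through the new lower bound Theorem~\ref{thm:lowerbound} (via Kinneberg's combinatorial argument and Corollary~\ref{cor:positivecontent}), which is precisely the ``genuine separation forces positive content'' mechanism you gesture at.
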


This conjecture was inspired by the techniques of \cite{WY} and the evidence of Kaufman's example, which provides a negative answer to Question \ref{q:vague1}, factors through a tree, and has $\HH^{2,1}_\infty$ equal to zero. The same properties hold for the examples in \cite[Section 2]{AS12}. Essentially, the conjecture asks whether Kaufman-type examples are the only examples of mappings with small mapping content, and hence mappings providing negative answers to Question \ref{q:vague1}, in the case $n=2$. 

\begin{remark}\label{rem:othern}
In the case $n\leq 1$ or $m=0$, mappings from $[0,1]^{n+m}$ with vanishing or small $\HH^{n,m}_\infty$ are easy to characterize, while in the case $n>2$ the problem appears difficult due to the constructions of topologically non-trivial low-rank Lipschitz mappings in \cite{WY, GHP}. Thus, the case $n=2$ that we address here is simply the first remaining open problem in a long list.  This is discussed further in \cite[Section 1.4]{David-Schul:harder-sarder}.
\end{remark}

Recently, Esmayli--Haj\l asz \cite{EGH-personal} showed that a mapping with $\HH^{2,m}_\infty$ equal to zero \textbf{must} factor through a tree, answering the qualitative cases of Question \ref{q:vague2} and, equivalently, Conjecture \ref{conj:21}. (This is the equivalence ``(e) $\Leftrightarrow$ (a)'' in \cite[Theorem 1.1]{EGH-personal}, combined with their Remark 1.2.)

\begin{theorem}[Esmayli--Haj\l asz \cite{EGH-personal}]\label{thm:EGH}
Let $Q_0=[0,1]^{2+m}$ and $f\colon Q_0 \rightarrow Y$ be a Lipschitz map into a metric space. Then $\HH^{2,m}_\infty(f,Q_0)=0$ if and only if $f$ factors through a tree. Moreover, in this case, if $f$ is $1$-Lipschitz then $f$ factors through a tree by $1$-Lipschitz maps.
\end{theorem}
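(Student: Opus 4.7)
The forward implication is straightforward. If $f = h \circ g$ where $g \colon Q_0 \to T$ and $h \colon T \to Y$ are Lipschitz with $T$ a metric tree, then for each dyadic cube $Q_i \subseteq Q_0$, the set $f(Q_i) = h(g(Q_i))$ is a Lipschitz image of a subset of $T$. A compact subset of a metric tree is a countable union of geodesic arcs and hence has $\cH^2_\infty$ equal to zero, and Lipschitz maps do not increase Hausdorff content, so $\cH^2_\infty(f(Q_i)) = 0$ for every $i$. Summing over any dyadic cover of $Q_0$ yields $\cH^{2,m}_\infty(f, Q_0) = 0$.

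For the reverse direction, my plan is first to deduce a pointwise differential bound on $f$ and then to build the factorization from it. Embedding $Y$ isometrically into $\ell^\infty$ and invoking Kirchheim's metric differentiability theorem, I would argue that $\cH^{2,m}_\infty(f, Q_0) = 0$ forces the $2$-dimensional Jacobian $J_2(df)$, namely the product of the top two singular values of the metric differential, to vanish $(2+m)$-Lebesgue almost everywhere. This would follow from an area-type inequality comparing, on each small cube, $\cH^2_\infty(f(Q)) \cdot \side(Q)^m$ to $\int_Q J_2(df) \, dx$; both scale like $\side(Q)^{2+m}$ on a rank-$2$ affine model, so vanishing of the sum along a sequence of covers forces the integral to vanish. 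Consequently the metric differential of $f$ has rank at most $1$ a.e.\ in $Q_0$.

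Next, to extract a tree factorization from the rank-$1$ bound, I would pass to the quotient of $Q_0$ under an equivalence relation $\sim$ declaring $x \sim y$ whenever $x$ and $y$ can be joined by a finite chain of continua $K_1, \dots, K_N$ in $Q_0$ with $x \in K_1$, $y \in K_N$, consecutive $K_j$'s intersecting, and $f$ constant on each $K_j$. Equip $T := Q_0 / \!\sim$ with the pseudometric $d_T([x], [y]) := \inf_\gamma \length(f \circ \gamma)$ over paths joining representatives. Then $f$ descends to $T$ as a map $h$, and the composition $f = h \circ g$ with $g(x) = [x]$ achieves the same Lipschitz constant as $f$ by construction, which handles the ``moreover'' clause.

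The principal obstacle is verifying that $T$ is actually a metric tree, rather than a $1$-dimensional space with embedded cycles. Pointwise rank $\leq 1$ alone does not rule out such cycles: a Lipschitz surjection $[0,1]^2 \to S^1$ has rank $\leq 1$ a.e.\ yet does not factor through a tree. The global content hypothesis $\cH^{2,m}_\infty(f, Q_0) = 0$ must therefore be used to exclude closed loops in the quotient, presumably by arguing that any such loop would bound a ``disk'' whose image would contribute a nontrivial lower bound to the mapping content, contradicting the assumption. Bridging this topological gap from the analytic hypothesis — effectively, upgrading an a.e.\ rank bound to a genuine four-point tree condition on the quotient — is the heart of the proof.
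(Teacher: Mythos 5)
This theorem is not proved in the paper: it is cited wholesale from Esmayli--Haj\l asz \cite{EGH-personal} and used as a black box in the proof of Theorem~\ref{thm:main}. So there is no ``paper's own proof'' to compare against; what follows is an assessment of your sketch on its own merits.

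Your forward direction reaches the right conclusion but the key step is shaky. A compact subset of a metric tree need \emph{not} be a countable union of geodesic arcs; e.g.\ a totally disconnected compact subset is not, and even a compact connected subtree can have infinite $\cH^1$ measure and fail to decompose into countably many rectifiable pieces in the naive way. What one actually needs is that a compact $\RR$-tree has $\cH^2 = 0$ (or, better, that the image $h(g(Q_i))$ does), and this deserves a genuine argument — for instance via total boundedness of the tree forcing a countable, length-controlled ``skeleton'' at each scale, or via a coarea-type inequality for Lipschitz maps into trees. As written, the step is asserted rather than proved.

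Your reverse direction is an honest sketch, and you are right that the heart of the matter is upgrading an a.e.\ rank-$\le 1$ bound to a genuine tree structure on the quotient. But two points need attention. First, the proposed intermediate step — that $\HH^{2,m}_\infty(f,Q_0)=0$ forces the metric Jacobian $J_2$ to vanish a.e.\ — is not immediate from an ``area-type inequality''; the whole subtlety of mapping content (discussed at length in \cite{AS12} and \cite{David-Schul:harder-sarder}) is that $\HH^{n,m}_\infty$ can vanish even though $\cH^n(f(Q_0))$ is positive, as Kaufman's example shows, so the relation between $\HH^{2,m}_\infty$ and $\int J_2$ is not a simple two-sided comparison and needs to be argued carefully (this is essentially what Lemma~\ref{lem:goodcube} provides, but in a quantitative contrapositive form). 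Second, the counterexample you offer to illustrate the gap is itself mistaken: a Lipschitz surjection $[0,1]^2 \to S^1$ of the form $(x,y)\mapsto e^{2\pi i x}$ \emph{does} factor through a tree, namely through $[0,1]$ via $x \mapsto e^{2\pi i x}$. The genuine obstruction you are gesturing at — that a rank-$\le 1$ condition alone does not obviously preclude cycles in the quotient — is real, and your acknowledgment that closing this gap is ``the heart of the proof'' is accurate; as it stands, the proposal does not close it, and so does not establish the reverse implication or the $1$-Lipschitz factorization in the ``moreover'' clause.
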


Motivated by the more quantitative concerns of \cite{AS12, David-Schul:harder-sarder}, we use Theorem \ref{thm:EGH} to prove a quantitative stability theorem: maps with small, but not necessarily vanishing, mapping content must be close to factoring through trees.

Our main theorem in this paper is the following, completely resolving Question \ref{q:vague2} and finishing the proof of Conjecture \ref{conj:21}.

\begin{maintheorem}\label{thm:main}
Let $m$ be a non-negative integer. For every $\epsilon>0$, there is a $\delta=\delta(\epsilon,m)>0$ with the following property:
Let $f\colon Q_0:=[0,1]^{2+m}\rightarrow \ell^\infty$ be a $1$-Lipschitz map.
If $\HH^{2,m}_\infty(f,Q_0)<\delta$, then there is a $1$-Lipschitz map $g\colon Q_0 \rightarrow \ell^\infty$ such that 
$$\sup_{x\in Q_0} \|g(x)-f(x)\|_\infty<\epsilon$$ 
and $g$ factors through a tree by $1$-Lipschitz maps. 

Conversely, for every $\epsilon'>0$, there is a $\delta'=\delta(\epsilon',m)>0$ such that if  $f,g\colon Q_0:=[0,1]^{2+m}\rightarrow \ell^\infty$ are $1$-Lipschitz maps, $g$ factors through a tree by $1$-Lipschitz maps, and
$$\sup_{x\in Q_0} \|g(x)-f(x)\|_\infty<\delta',$$
then $\HH^{2,m}_\infty(f,Q_0)<\epsilon'$. 
\end{maintheorem}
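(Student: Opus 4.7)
The plan is to prove both directions of the theorem by a contradiction-plus-compactness argument, reducing to the qualitative Theorem~\ref{thm:EGH} of Esmayli--Haj\l asz via new semicontinuity properties of $\HH^{2,m}_\infty$ under uniform convergence of $1$-Lipschitz maps.

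\textbf{Forward direction.} I proceed by contradiction: suppose there exist $\epsilon > 0$ and $1$-Lipschitz maps $f_k\colon Q_0 \to \ell^\infty$ with $\HH^{2,m}_\infty(f_k, Q_0) \to 0$ but such that no tree-factoring $1$-Lipschitz map is sup-norm within $\epsilon$ of $f_k$. After normalizing $f_k(0) = 0$, the images $f_k(Q_0)$ are uniformly bounded and uniformly totally bounded (as $1$-Lipschitz images of $Q_0$), hence Gromov--Hausdorff precompact. Passing to a subsequence, I isometrically re-embed all the $f_k(Q_0)$ into a common copy of $\ell^\infty$ so that $f_k \to f_\infty$ uniformly for some $1$-Lipschitz $f_\infty$. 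Crucially, this re-embedding preserves both $\HH^{2,m}_\infty$ (which is intrinsic to the image) and the distance to the set of tree-factoring $1$-Lipschitz maps; the latter uses that isometries between subsets of $\ell^\infty$ extend to $1$-Lipschitz endomorphisms of $\ell^\infty$ by McShane extension, so a tree factorization of a re-embedded map pulls back to a tree factorization with the same Lipschitz constants and sup-norm deviation. A lower semicontinuity statement $\HH^{2,m}_\infty(f_\infty, Q_0) \leq \liminf_k \HH^{2,m}_\infty(f_k, Q_0) = 0$ then gives, via Theorem~\ref{thm:EGH}, that $f_\infty$ factors through a tree by $1$-Lipschitz maps, contradicting the hypothesis for large~$k$.

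\textbf{Converse direction.} I treat this again by contradiction-plus-compactness. Given sequences $f_k, g_k$ with $\|f_k - g_k\|_\infty \to 0$, each $g_k$ tree-factoring by $1$-Lipschitz maps $g_k = \psi_k \circ \phi_k$, yet $\HH^{2,m}_\infty(f_k, Q_0) \geq \epsilon'$, the same Gromov--Hausdorff extraction yields $f_k \to f_\infty$ uniformly, hence $g_k \to f_\infty$ as well. The tree factorizations $g_k = \psi_k \circ \phi_k$ pass to a limit using Gromov--Hausdorff compactness for metric trees of bounded diameter, producing a tree factorization $f_\infty = \psi_\infty \circ \phi_\infty$ by $1$-Lipschitz maps. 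So $\HH^{2,m}_\infty(f_\infty, Q_0) = 0$ by Theorem~\ref{thm:EGH}, and an upper semicontinuity statement $\limsup_k \HH^{2,m}_\infty(f_k, Q_0) \leq \HH^{2,m}_\infty(f_\infty, Q_0)$ contradicts $\HH^{2,m}_\infty(f_k, Q_0) \geq \epsilon'$.

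\textbf{Main obstacle.} The two semicontinuity statements for $\HH^{2,m}_\infty$ under uniform convergence of $1$-Lipschitz maps constitute the main technical content. Upper semicontinuity is comparatively accessible: it inherits from upper semicontinuity of $\HH^2_\infty$ under Hausdorff convergence of sets, since a near-optimal dyadic cover for the limit, slightly enlarged, covers nearby maps' images. Lower semicontinuity is the central difficulty, because $\HH^2_\infty$ is \emph{not} lower semicontinuous on arbitrary compact sets (e.g., finite $\epsilon$-nets in $[0,1]$ converge in Hausdorff to $[0,1]$ while having $\HH^1_\infty$ equal to zero). The $1$-Lipschitz constraint is precisely what rescues the situation: the strategy is to take near-optimal dyadic covers of the $f_k$, truncate them to finitely many cubes at each fixed scale so that cube-by-cube upper semicontinuity of $\HH^2_\infty$ can be applied to the finite partial sum, and then control the residual contribution of small cubes using the uniform bound $\HH^2_\infty(f_k(Q_i)) \leq C \side(Q_i)^2$ (from the $1$-Lipschitz constraint) combined with a careful choice of cutoff scale. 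These estimates are precisely the ``new lower bounds and continuity statements for mapping content'' mentioned in the abstract, and carrying them out rigorously is the heart of the paper.
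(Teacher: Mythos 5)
Your high-level architecture matches the paper's: contradiction plus Gromov--Hausdorff compactness, McShane re-embedding, and reduction to the qualitative Theorem~\ref{thm:EGH}, with the technical weight on a continuity statement for mapping content. The paper's Lemma~\ref{lem:GHcompact} and Lemma~\ref{lem:monotone} realize the compactness/re-embedding steps essentially as you describe (the paper does not need your extra ``Gromov--Hausdorff compactness for trees'' step, because Theorem~\ref{thm:alldim} is phrased purely in terms of vanishing content and the tree language is re-introduced only at the end via Theorem~\ref{thm:EGH}; both framings are fine).

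The genuine divergence --- and where a gap appears --- is in your proposed proof of the continuity statement. The paper does not attempt full upper/lower semicontinuity of $\HH^{n,m}_\infty$ under uniform convergence; it proves only the ``zero iff limit zero'' dichotomy of Theorem~\ref{thm:contentzero}, and it does so contrapositively and structurally: if $\HH^{n,m}_\infty(f,Q_0)=\delta>0$, then Lemma~\ref{lem:goodcube} (from \cite{David-Schul:harder-sarder}) produces a cube $Q$ of definite size on which the pullback metric of $f$ is well-approximated by a seminorm that is bounded below on an $n$-plane; a uniformly close map $f_i$ then has essentially the same seminorm on $Q$, and Theorem~\ref{thm:lowerbound} (via Kinneberg's Proposition~\ref{prop:kinneberg}, applied through Corollary~\ref{cor:positivecontent}) forces $\HH^{n,m}_\infty(f_i,Q)\gtrsim_{n,m,\delta}|Q|$. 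This gives a quantitative lower bound on $\HH^{n,m}_\infty(f_i,Q_0)$ in terms of $\delta$ alone, which is exactly the ``lower semicontinuity at zero'' you need but obtained by an entirely different route --- metric differentiation and face-to-face distance estimates, not cover manipulation.

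Your proposed cover-manipulation proof of lower semicontinuity does not close. You correctly identify that $\HH^n_\infty$ is not lower semicontinuous and that the $1$-Lipschitz hypothesis is what must rescue the situation, but the specific fix you sketch --- truncating a near-optimal cover of $f_k$ at a scale $s$ and controlling the small cubes via $\HH^n_\infty(f_k(Q_i))\lesssim\side(Q_i)^n$ --- only yields $\sum_{\side(Q_i)<s}\HH^n_\infty(f_k(Q_i))\side(Q_i)^m\lesssim\sum_{\side(Q_i)<s}\side(Q_i)^{n+m}$, and the right-hand side can be as large as $|Q_0|=1$ (the small cubes of an almost-disjoint cover may carry almost all the volume). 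This bound is always true and gives no smallness; it is the trivial estimate $\HH^{n,m}_\infty(f,Q_0)\lesssim 1$. To get a useful bound one must exploit that a $1$-Lipschitz map that significantly compresses $n$-content on \emph{every} cube of a definite scale must do so on some cube where it is also approximately a seminorm, and that is precisely what the quantitative differentiation input (Theorem~\ref{thm:quantdiff} via Lemma~\ref{lem:goodcube}) together with the face-distance lower bound (Theorem~\ref{thm:lowerbound}) provides. Your sketch contains no substitute for either of these ingredients, and they are the ``new lower bounds'' of the abstract. The upper-semicontinuity half of your argument has a similar (though less fatal) issue: passing from cube-by-cube $\limsup_k\HH^n_\infty(f_k(Q_i))\leq\HH^n_\infty(f_\infty(Q_i))$ to the sum requires either a finite cover or a uniform tail bound, and for a general near-optimal cover of a map with zero content neither is automatic; the paper sidesteps this entirely by running the same contrapositive argument with the roles of $f$ and $f_i$ exchanged.
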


\begin{remark}
As every separable metric space embeds isometrically in $\ell^\infty$, the restriction in Conjecture \ref{conj:21} and Theorem \ref{thm:main} that the target space is $\ell^\infty$ is no restriction at all; it simply allows for a cleaner statement.
\end{remark}

Without the restriction to $n=2$ in Theorem \ref{thm:main}, the result is false, as alluded to in Remark \ref{rem:othern}. On the other hand, the assumption $n=2$ enters this paper only through Theorem \ref{thm:EGH}, so our methods in fact show the following in all dimensions.

\begin{maintheorem}\label{thm:alldim}
For every $n,m$ and $\epsilon>0$, there is a $\delta=\delta(\epsilon,n,m)>0$ with the following property: If $f\colon Q_0:=[0,1]^{n+m}\rightarrow \ell^\infty$ is a $1$-Lipschitz map and $\HH^{n,m}_\infty(f,Q_0)<\delta$, then there is a $1$-Lipschitz map $g\colon Q_0 \rightarrow \ell^\infty$ such that
$$\sup_{x\in Q_0} \|g(x)-f(x)\|_\infty<\epsilon$$
and $\HH^{n,m}_\infty(g, Q_0) = 0$.

Conversely, for every $\epsilon'>0$, there is a $\delta'=\delta(\epsilon',n,m)>0$ such that if $f,g\colon Q_0:=[0,1]^{n+m}\rightarrow \ell^\infty$ are $1$-Lipschitz maps, $\HH^{n,m}_\infty(g,Q_0)=0$, and
$$\sup_{x\in Q_0} \|g(x)-f(x)\|_\infty<\delta',$$
then $\HH^{n,m}_\infty(f,Q_0)<\epsilon'$. 
\end{maintheorem}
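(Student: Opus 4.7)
The strategy is to combine a continuity statement for $\cH^{n,m}_\infty$ with a compactness argument on pseudometrics. Since $\cH^{n,m}_\infty(f,Q_0)$ depends only on the pseudometric $d_f(x,y):=\|f(x)-f(y)\|_\infty$ on $Q_0$, one may work on the space of $1$-Lipschitz pseudometrics on $Q_0$ bounded by the Euclidean metric; by Arzel\`a--Ascoli this space is compact in the topology of uniform convergence. The goal is to show that $\cH^{n,m}_\infty$ is both upper and lower semi-continuous on this space, from which both directions of the theorem follow by a standard compactness/contradiction scheme.

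Upper semi-continuity yields the converse. Given $g$ with $\cH^{n,m}_\infty(g,Q_0)=0$ and $\epsilon'>0$, I would use compactness of $Q_0$ to choose a \emph{finite} dyadic cover $\{Q_i\}_{i=1}^N$ with $\sum_i \cH^n_\infty(g(Q_i))\side(Q_i)^m<\epsilon'/2$ and, inside each $Q_i$, a finite near-optimal ball cover $\{B(x_{ij},r_{ij})\}$ of $g(Q_i)$. If $\|f-g\|_\infty<\delta'$, then $f(Q_i)\subseteq\bigcup_j B(x_{ij},r_{ij}+\delta')$, so $\cH^n_\infty(f(Q_i))\leq\sum_j(r_{ij}+\delta')^n$; since only finitely many balls appear, the total mapping-content bound is continuous in $\delta'$ and less than $\epsilon'$ for $\delta'$ small. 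Uniformity of $\delta'$ in $g$ follows from the compactness argument below.

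For the forward direction, I would argue by contradiction: assume there is $\epsilon_0>0$ and $1$-Lipschitz maps $f_k\colon Q_0\to\ell^\infty$ with $\cH^{n,m}_\infty(f_k,Q_0)\to 0$ such that no $1$-Lipschitz $g$ with $\cH^{n,m}_\infty(g,Q_0)=0$ lies within $\epsilon_0$ of any $f_k$. Translate so $f_k(0)=0$, and extract a subsequence with $d_{f_k}\to d_\infty$ uniformly on $Q_0\times Q_0$. Lower semi-continuity then gives $\cH^{n,m}_\infty(d_\infty,Q_0)=0$. To realize a competitor $g$ close to $f_k$ in the \emph{same} copy of $\ell^\infty$, I would form the pseudometric on $Q_0\sqcup Q_0$ that restricts to $d_{f_k}$ and $d_\infty$ on the two copies and assigns distance $\epsilon_k:=\sup_{x,y}|d_{f_k}(x,y)-d_\infty(x,y)|$ between $(x,1)$ and $(x,2)$, and then invoke the injectivity of $\ell^\infty$ to extend $f_k$ from the first copy to the whole glued space. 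The restriction $g$ to the second copy is $1$-Lipschitz with induced pseudometric bounded above by $d_\infty$, hence $\cH^{n,m}_\infty(g,Q_0)=0$, and $\|g-f_k\|_\infty\leq\epsilon_k\to 0$, contradicting the hypothesis.

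The \emph{main obstacle} is the lower semi-continuity of $\cH^{n,m}_\infty$, since Hausdorff content is only upper semi-continuous under Hausdorff convergence of compact sets (content can drop in the limit), so near-optimal covers for $f_k$ cannot be transferred directly to the limit map. The plan is to establish new lower bounds on $\cH^n_\infty(f(Q))$ that survive under Hausdorff limits---for instance via a dyadic or discretized variant of Hausdorff content that is continuous by construction, or bounds in terms of the number of well-separated balls of comparable radii meeting $f(Q)$---and then to use Tychonoff compactness on the dyadic tree to extract, from near-optimal covers $\{Q_i^k\}$ for $f_k$, a subsequential limit covering that is near-optimal for the limit map $g$.
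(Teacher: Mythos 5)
Your high-level architecture matches the paper's: reduce to a compactness statement on the space of $1$-Lipschitz pseudometrics (the paper's Lemma~\ref{lem:GHcompact} is exactly Gromov--Hausdorff compactness for $1$-Lipschitz images of $Q_0$), reduce Theorem~\ref{thm:alldim} to a ``continuity at zero'' statement for $\HH^{n,m}_\infty$ (the paper's Theorem~\ref{thm:contentzero}), and, once the abstract limit pseudometric is in hand, use the injectivity of $\ell^\infty$ plus the monotonicity $\HH^{n,m}_\infty(\phi\circ f)\leq\HH^{n,m}_\infty(f)$ for $1$-Lipschitz $\phi$ (Lemma~\ref{lem:monotone}) to manufacture a competitor $g$ in the \emph{same} copy of $\ell^\infty$. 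Your gluing-of-two-copies trick is an equivalent formulation of the paper's $j\circ\iota'\circ f$ construction. Your direct argument for upper semi-continuity at a fixed $g$ (finite dyadic cover, finite ball cover, enlarge radii by $\delta'$) is also fine.

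The genuine gap is precisely where you flag it, and I do not think your proposed repairs close it. You need: if $\dist(f_i,f)\to 0$ and $\HH^{n,m}_\infty(f_i,Q_0)\to 0$, then $\HH^{n,m}_\infty(f,Q_0)=0$ (and this is also needed, symmetrically, to make your converse-direction $\delta'$ uniform in $g$). Passing near-optimal covers $\{Q^i_j\}$ for $f_i$ to a limiting cover via Tychonoff on the dyadic tree fails because there is no a priori lower bound on $\side(Q^i_j)$: the cubes can shrink to scale zero as $i\to\infty$, and Hausdorff content of the image sets is not lower semi-continuous under Hausdorff convergence. Counting $r$-separated points meeting $f(Q)$ does not lower-bound $\HH^n_\infty(f(Q))$ either: $N$ collinear points spaced $r$ apart have $\HH^1_\infty=0$, not $\gtrsim Nr$. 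A ``discretized content that is continuous by construction'' would have to be \emph{comparable} to $\HH^n_\infty$ to be useful, and no such quantity is continuous in the needed sense --- the obstruction is real, not notational.

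What the paper supplies to close this gap are two tools you do not have. First, Theorem~\ref{thm:lowerbound}, a Kinneberg-type combinatorial lower bound
\[
\HH^{n,m}_\infty(f,Q_0)\;\geq\;\prod_{k=1}^n\dist_Y\bigl(f(F_k),f(F'_k)\bigr),
\]
whose right-hand side \emph{is} stable under uniform perturbation of $f$. Second, a quantitative differentiation statement (Lemma~\ref{lem:goodcube}, drawn from \cite{David-Schul:harder-sarder} and ultimately from \cite{AS14}): if $\HH^{n,m}_\infty(f,Q_0)=\delta>0$, then there is a cube $Q\subseteq Q_0$ of side length bounded below in terms of $(n,m,\delta,\eta)$ on which the pullback metric is $\eta$-approximated by a seminorm that is bounded below on some $n$-plane. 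Combining these (Corollary~\ref{cor:positivecontent}) shows that positive mapping content forces a definite ``rank-$\geq n$'' metric behavior at a definite scale, and that behavior --- hence a definite lower bound on $\HH^{n,m}_\infty$ --- persists for any map uniformly close to $f$. That is the mechanism behind Theorem~\ref{thm:contentzero}, and it is the substantive content of the proof that your proposal leaves open.
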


Thus, attempts at higher-dimensional generalizations of Theorem \ref{thm:main} could perhaps first proceed by attempting to characterize maps with vanishing mapping content in higher dimensions. Progress on this appears in \cite[Corollary 7.22]{esmayli2020coarea}.

Before proceeding with more precise motivation and a description of the remaining results of the paper, we explicitly state a simple corollary of Theorem \ref{thm:main} which does not require the notion of mapping content. This is just the case $m=0$, in which case $\HH^{2,m}_\infty(f,Q_0)$ is simply $\HH^2_\infty(f(Q_0))$, the standard $2$-dimensional Hausdorff content of the image of $f$. Thus:

\begin{corollary}\label{cor:mzero}
For every $\epsilon>0$, there is a $\delta=\delta(\epsilon)>0$ with the following property:
Let $f\colon Q_0:=[0,1]^{2}\rightarrow \ell^\infty$ be a $1$-Lipschitz map.
If $\HH^{2}_\infty(f(Q_0))<\delta$, then there is a map $g\colon Q_0 \rightarrow \ell^\infty$ such that 
$$\sup_{x\in Q_0} \|g(x)-f(x)\|_\infty<\epsilon$$ 
and $g$ factors through a tree.
\end{corollary}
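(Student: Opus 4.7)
The plan is to derive Corollary \ref{cor:mzero} directly from Theorem \ref{thm:main} by observing that in the case $m=0$ the mapping content reduces to the Hausdorff content of the image. That is, I will show
$$\cH^{2,0}_\infty(f,Q_0) \;=\; \cH^2_\infty(f(Q_0)),$$
after which the corollary is just Theorem \ref{thm:main} with $m=0$ (weakened by dropping the fact that $g$ factors through the tree by $1$-Lipschitz maps).

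To establish the identity above, I unpack Definition \ref{def:mappingcontent} with $m=0$. Since $\side(Q_i)^0 = 1$, the content becomes
$$\cH^{2,0}_\infty(f,Q_0) = \inf \sum_i \cH^2_\infty(f(Q_i)),$$
with the infimum over dyadic coverings $\{Q_i\}$ of $Q_0$. The bound $\cH^{2,0}_\infty(f,Q_0) \leq \cH^2_\infty(f(Q_0))$ is immediate by taking the trivial covering $\{Q_0\}$ (which is itself a dyadic cube in $Q_0$). The reverse inequality is countable subadditivity of Hausdorff content applied to $f(Q_0) = \bigcup_i f(Q_i)$.

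With the identity in hand, the conclusion is straightforward: given $\epsilon>0$, I set $\delta := \delta(\epsilon,0)$ as furnished by Theorem \ref{thm:main}. If $\cH^2_\infty(f(Q_0)) < \delta$, then $\cH^{2,0}_\infty(f,Q_0) < \delta$, so Theorem \ref{thm:main} provides the desired $1$-Lipschitz approximant $g$ that factors through a tree by $1$-Lipschitz maps; in particular, $g$ factors through a tree. There is no real obstacle here—the argument is simply a matter of specializing Definition \ref{def:mappingcontent} to $m=0$, and all the substantive work is carried by Theorem \ref{thm:main}.
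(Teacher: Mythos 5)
Your proposal is correct and takes exactly the same route as the paper: the paper states the corollary is ``just the case $m=0$'' of Theorem~\ref{thm:main}, justified by the identity $\HH^{2,0}_\infty(f,Q_0)=\HH^2_\infty(f(Q_0))$, which you verify via the trivial cover in one direction and countable subadditivity of Hausdorff content in the other. The paper leaves that identity unargued, so your brief justification is a harmless elaboration rather than a different approach.
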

In other words, a $1$-Lipschitz map from a square can only compress $2$-dimensional content insofar as it is close to factoring through a tree.

\subsection{Motivation from geometric measure theory: ``Hard Sard sets''}
As motivation, we now explain more precisely the connection between our results and Question \ref{q:vague1}.

A classical result in geometric measure theory says that for a Lipschitz map $f:\RR^{n+m}\to \RR^n$, the pre-image of $\cH^n$--almost every point of $f(\RR^{n+m})$ is $m$-rectifiable. (See, e.g., \cite[Theorem 3.2.22]{Fe}.) That is, almost every fiber of $f$ can be covered by Lipschitz images of $m$-dimensional Euclidean space, up to $m$-dimensional measure zero.

As with many problems in geometric measure theory, there is a natural quantitative follow-up question:
{\it is it possible to parametrize these fibers (or at least a \textbf{large fraction} of them) in a \textbf{uniform} way?}
More specifically,
{\it is it possible to {\bf rearrange} the domain (or at least a \textbf{large fraction} of it) so that the fibers are parallel $m$-planes}? 

The canonical example of a map whose fibers are parallel $m$-planes is a simple orthogonal projection, which connects this to Question \ref{q:vague1}. The recent works \cite{AS12, HZ, David-Schul:harder-sarder} all explore this question. We first make more precise, with a definition from \cite{AS12, David-Schul:harder-sarder}, what we mean by ``straightening out'' or ``making parallel'' the fibers of a mapping. 

\begin{definition}\label{def:HSpair}
Let $Q_0=[0,1]^{n+m}$ and $f\colon Q_0 \rightarrow Y$ be a Lipschitz map into a metric space. Let $E\subseteq Q_0=[0,1]^{n+m}$ and $g:E\rightarrow \RR^{n+m}$ be a bi-Lipschitz mapping. We call $(E,g)$ a \textbf{Hard Sard pair for $f$} if there is a constant $C_{Lip}$ such that the following conditions hold.

Write  points of $\RR^{n+m}$ as $(x,y)$ with $x\in \RR^n$ and $y\in \RR^m$. Let $F = f \circ g^{-1}$.
We ask that:
\begin{enumerate}[(i)]
\item\label{HS2} $g$ extends to a globally $C_{Lip}$-bi-Lipschitz homeomorphism from $\RR^{n+m}$ to $\RR^{n+m}$.
\item\label{HS3} If $(x,y)$ and $(x',y')$ are in $g(E)$, then $F(x,y) = F(x',y')$ if and only if $x=x'$. 
\item\label{HS4} The map
$(x,y) \mapsto (F(x,y),y)$
is $C_{Lip}$-bi-Lipschitz on the set $g(E)$. In particular, for all $y\in \RR^m$, the restriction
$F|_{(\RR^n\times\{y\}) \cap g(E)}$
is $C_{Lip}$-bi-Lipschitz.
\end{enumerate}
If $E\subseteq Q_0$ is a set and there exists a mapping $g\colon \RR^{n+m}\rightarrow \RR^{n+m}$ satisfying \eqref{HS2}-\eqref{HS4} for $E$, then we call $E$ a \textbf{Hard Sard set for $f$}.
\end{definition}
We think of $g$ as a globally bi-Lipshitz change of coordinates that ``straightens out'' the fibers of $f|_E$. 
Since the linear projection mapping $\pi(x,y)=x$ on $Q_0$ satisfies all the properties requested for the map $F=f\circ g^{-1}$ on $g(E)$ in Definition \ref{def:HSpair}, we interpret Definition \ref{def:HSpair} as saying that $f$ ``looks like a projection'' when restricted to $E$, up to globally a bi-Lipschitz change of coordinates $g$. See Figure \ref{f:figure-1}. 

\begin{figure}[t]
\includegraphics[width=0.8\textwidth]{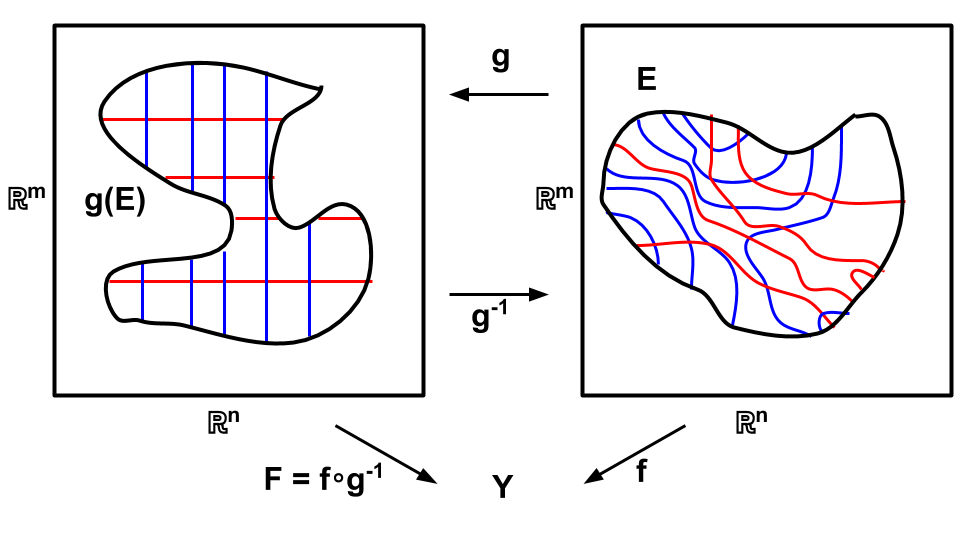}
\centering
\caption{A schematic diagram of a Hard Sard set in the case $n=m=1$. The map $g$ straightens out the (blue) fibers of $f$ and the (red) transverse curves on which $f$ is bi-Lipschitz.}\label{f:figure-1}
\end{figure}


The motivation for both of the above definitions was Theorem I of Azzam and the second author in \cite{AS12}. That result was improved in \cite{David-Schul:harder-sarder} to the following theorem.
\begin{theorem}[Theorem A of \cite{David-Schul:harder-sarder}]\label{thm:hardersarder}
Let $Q_0$ be the unit cube in $\RR^{n+m}$ and let $f\colon Q_0\rightarrow X$ be a $1$-Lipschitz map into a metric space $X$ with
$ \cH^n(f(Q_0)) \leq 1.$
Given any $\gamma>0$, we can write
$ Q_0 = E_1 \cup \dots \cup E_M \cup G,$
where $E_i$ are Hard Sard sets for $f$ and
$ \cH^{n,m}_\infty(f,G) < \gamma.$
The constant $M$ and the constants $C_{Lip}$ associated to the Hard Sard pairs $(E_i,g_i)$ depend only on $n$, $m$, and $\gamma$.
\end{theorem}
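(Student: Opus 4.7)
The overall strategy is a corona-type stopping-time decomposition of the dyadic cubes in $Q_0$, paired with a bi-Lipschitz extraction step to produce each Hard Sard piece. Fix parameters $\eta, \tau>0$ (to be chosen as functions of $\gamma,n,m$ at the end). I declare a dyadic cube $Q\subseteq Q_0$ to be \emph{$\eta$-stopping} if $\cH^n_\infty(f(Q)) < \eta\,\side(Q)^n$, and let $G$ be the union of all maximal $\eta$-stopping cubes. Then by the definition of $\cH^{n,m}_\infty$,
$$\cH^{n,m}_\infty(f,G) \le \sum_i \cH^n_\infty(f(Q_i))\,\side(Q_i)^m < \eta \sum_i \side(Q_i)^{n+m} \le \eta,$$
so choosing $\eta<\gamma$ takes care of the small-content part of the decomposition.

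On the non-stopping cubes, $\cH^n_\infty(f(Q))\ge \eta\,\side(Q)^n$ at every scale, meaning that $f$ is quantitatively ``thick'' there. I would construct the Hard Sard sets one at a time via a pigeonhole over $\Gr(n,n+m)$: on each non-stopping cube $Q$, choose an $n$-plane $V_Q$ in an $\tau$-net of the Grassmannian along which $f(Q)$ projects with $n$-content comparable to that of $f(Q)$, and pass to a subfamily $\cF$ of cubes sharing a common $V$. The hypothesis $\cH^n(f(Q_0))\le 1$ caps the total ``$n$-dimensional budget'' available and hence controls the number of stages in the iteration.

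On $\bigcup\cF$, a quantitative bi-Lipschitz selection (as in Theorem~I of \cite{AS12}, sharpened in \cite{David-Schul:harder-sarder}) extracts a large subset $E$ together with a bi-Lipschitz map of $E$ into $\RR^{n+m}$ sending fibers of $f$ to translates of $V^\perp$ while remaining bi-Lipschitz on each such translate. The final step is to extend this partial map to a global bi-Lipschitz homeomorphism $g_1$ of $\RR^{n+m}$ satisfying conditions \eqref{HS2}--\eqref{HS4} with $C_{Lip}$ depending only on $n,m,\gamma$. This yields one Hard Sard pair $(E_1,g_1)$ capturing a definite fraction of the $(n+m)$-measure of $Q_0\setminus G$. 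Iterating on the complement produces $(E_2,g_2),\ldots,(E_M,g_M)$, with $M=M(n,m,\gamma)$ determined by the volume captured per iteration and the total $n$-content budget.

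The main obstacle will be the global bi-Lipschitz extension step: the fibers of $f|_E$, while uniformly transverse to $V$, need not be flat, so one needs quantitative Lipschitz-graph parametrizations and uniform control across the cubes of $\cF$ in order to extend the change of coordinates to all of $\RR^{n+m}$ with constants independent of the geometry of $E$. This is precisely the point where the refined bi-Lipschitz extension machinery of \cite{David-Schul:harder-sarder} (tightening that of \cite{AS12}) enters, and it is what ultimately drives the choice of $\eta$, $\tau$, and $C_{Lip}$ in terms of $\gamma$, $n$, and $m$.
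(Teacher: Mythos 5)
This statement is not proved in the present paper; it is quoted directly as Theorem A of \cite{David-Schul:harder-sarder}, so there is no in-paper argument against which to compare your proposal. Your sketch does capture the broad outline of the Azzam--Schul and David--Schul strategy: a stopping-time decomposition to isolate a low-mapping-content remainder $G$, followed by iterated extraction of bi-Lipschitz pieces. The estimate $\cH^{n,m}_\infty(f,G) \le \eta < \gamma$ obtained from the maximal stopping cubes is correct, since those cubes form an almost-disjoint dyadic cover of $G$.

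However, as a reconstruction the proposal is circular precisely where the theorem is hard: to build each Hard Sard pair you invoke ``a quantitative bi-Lipschitz selection (as in Theorem I of \cite{AS12}, sharpened in \cite{David-Schul:harder-sarder})'' and ``the refined bi-Lipschitz extension machinery of \cite{David-Schul:harder-sarder}'', but Theorem A of \cite{David-Schul:harder-sarder} \emph{is} that sharpening, so you are effectively citing the result you set out to prove. The genuinely technical steps --- verifying conditions \eqref{HS2}--\eqref{HS4} of Definition~\ref{def:HSpair}, in particular producing a \emph{global} $C_{Lip}$-bi-Lipschitz homeomorphism of $\RR^{n+m}$, and bounding the number of iterations $M$ in terms of $n,m,\gamma$ alone --- are flagged as ``the main obstacle'' but not attacked, and the vague appeal to an ``$n$-dimensional budget'' does not actually yield the bound on $M$. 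A self-contained argument would need the quantitative differentiation input (Theorem~\ref{thm:quantdiff}) and the associated seminorm and cube analysis developed in \cite{AS12, David-Schul:harder-sarder}, none of which appear in your outline; otherwise one should simply cite the result, as the paper does.
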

This theorem and \cite[Theorem I]{AS12} can be viewed as ``quantitative implicit function theorems'' for Lipschitz maps into metric spaces. Compare also with the qualitative theorem of \cite{HZ}.

Thus, in combination with Theorem \ref{thm:hardersarder} (or Theorem I of \cite{AS12}), our Theorem \ref{thm:main} immediately implies the following.
\begin{corollary}
Let $Q_0$ be the unit cube in $\RR^{2+m}$ and let $f\colon Q_0\rightarrow \ell^\infty$ be a $1$-Lipschitz map with $ \cH^n(f(Q_0)) \leq 1.$

If 
$$\sup_{x\in Q_0} \|g(x)-f(x)\|_\infty \geq \epsilon$$ 
for all mappings $g\colon Q_0 \rightarrow \ell^\infty$ that factor through trees, then $f$ admits a Hard Sard set $E$ with $\HH^{n+m}(E) > \gamma$, where $\gamma>0$ depends only on $m$ and $\epsilon$.
\end{corollary}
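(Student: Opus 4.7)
The plan is to argue by contrapositive, combining Theorem~\ref{thm:main} with the quantitative implicit function theorem~\ref{thm:hardersarder}. Assume $f$ is not $\epsilon$-close in sup norm to any $1$-Lipschitz map factoring through a tree. Then by the contrapositive of Theorem~\ref{thm:main}, we must have $\HH^{2,m}_\infty(f,Q_0)\geq \delta$ for some $\delta=\delta(\epsilon,m)>0$, for otherwise Theorem~\ref{thm:main} would produce a tree-factoring map within $\epsilon$ of $f$. (Note that $n=2$ throughout the corollary, by the hypothesis that we are applying our main result.)

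Next I would apply Theorem~\ref{thm:hardersarder} with the parameter $\gamma_0=\delta/2$. This furnishes a decomposition $Q_0=E_1\cup\cdots\cup E_M\cup G$, where each $E_i$ is a Hard Sard set for $f$, the number $M$ and the bi-Lipschitz constants depend only on $m$ and $\epsilon$ (through $\delta$), and $\HH^{2,m}_\infty(f,G)<\delta/2$. Subadditivity of the mapping content under disjoint unions is immediate from its definition: concatenating dyadic covers of each piece yields a dyadic cover of $Q_0$. Hence
$$\delta\leq \HH^{2,m}_\infty(f,Q_0)\leq \sum_{i=1}^M \HH^{2,m}_\infty(f,E_i)+\HH^{2,m}_\infty(f,G)<\sum_{i=1}^M \HH^{2,m}_\infty(f,E_i)+\tfrac{\delta}{2},$$
so at least one index $i$ satisfies $\HH^{2,m}_\infty(f,E_i)\geq \delta/(2M)$.

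Finally, I would compare the mapping content of $f$ on a set to its Hausdorff measure. Since $f$ is $1$-Lipschitz, any dyadic cube $Q$ satisfies $\HH^2_\infty(f(Q))\leq C\side(Q)^2$, and consequently, for any $E\subseteq Q_0$,
$$\HH^{2,m}_\infty(f,E)\leq C\inf\sum_i \side(Q_i)^{2+m}\leq C'\,\HH^{2+m}(E),$$
the infimum being over dyadic covers of $E$. Applied to the chosen $E_i$, this yields $\HH^{2+m}(E_i)\geq \delta/(2C'M)=:\gamma$, depending only on $m$ and $\epsilon$, which is the desired conclusion.

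There is no real obstacle in this argument: the two technical ingredients needed beyond Theorems~\ref{thm:main} and~\ref{thm:hardersarder} are the subadditivity of mapping content under disjoint unions and the Lipschitz upper bound on mapping content in terms of Hausdorff measure, both of which follow from elementary manipulations with the defining covers. The only mildly delicate point is ensuring that the constants in Theorem~\ref{thm:hardersarder}, when instantiated with $\gamma_0=\delta(\epsilon,m)/2$, still depend only on $m$ and $\epsilon$, which is automatic.
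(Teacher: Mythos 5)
Your proof is correct and follows essentially the same route as the paper's: lower-bound $\HH^{2,m}_\infty(f,Q_0)$ via the contrapositive of Theorem~\ref{thm:main}, apply Theorem~\ref{thm:hardersarder} to extract a Hard Sard set carrying a definite amount of mapping content, then pass to a lower bound on its Hausdorff measure. The only difference is cosmetic: you spell out the pigeonhole step and derive the bound $\HH^{2,m}_\infty(f,E)\lesssim\HH^{2+m}(E)$ directly from the definition, whereas the paper cites Lemma~3.5 of \cite{David-Schul:harder-sarder} for the comparability of $\HH^{2+m}(E)$ and $\HH^{2,m}_\infty(f,E)$ on a Hard Sard set.
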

\begin{proof}
By Theorem \ref{thm:main}, we have $\HH^{n,m}_\infty(f,Q_0)$ bounded below depending only on $\epsilon$. By Theorem \ref{thm:hardersarder}, $f$ then admits a Hard Sard set $E$ with $\HH^{n,m}_\infty(f,E)$ (and the associated constants) bounded below. The $\HH^{n+m}$-measure of $E$ is then comparable to $\HH^{n,m}_\infty(f,E)$ by \cite[Lemma 3.5]{David-Schul:harder-sarder}.
\end{proof}






\subsection{Additional results on mapping content: lower bounds and continuity}

%
%

Theorem \ref{thm:main} will follow from Theorem \ref{thm:EGH} and a new lower bound (Theorem \ref{thm:lowerbound}) and continuity statement (Theorem \ref{thm:contentzero}) for mapping content, which apply for general $n,m\in\mathbb{N}$. For the remainder of the document, we will fix $n,m$ non-negative integers. We will write $Q_0=[0,1]^{n+m}$ and points of $\RR^{n+m}$ as $(x,y)$ where $x\in \RR^n$ and $y\in \RR^m$. Later on, we will specialize to the case $n=2$ to prove Theorem \ref{thm:main}.

Our theorem on lower-bounding mapping content is an extension of a result of Kinneberg \cite[Corollary 4.2]{kinneberg2016discrete}. Kinneberg's result gives a simple way to lower bound the Hausdorff content $\HH^d_\infty(f([0,1]^d))$ of the image of a continuous map defined on the unit cube, and we extend his methods to apply to the mapping content $\HH^{n,m}_\infty$. 

Continuing to set $Q_0 = [0,1]^{n+m}$, we let 
\begin{equation}\label{eq:faces}
(F_1, F'_1), (F_2, F'_2), \dots, (F_{n+m}, F'_{n+m})
\end{equation}
 denote all the pairs of opposite faces of $\partial Q_0$. Thus,
$$ F_k = \{(t_1, t_2, \dots, t_{k-1}, 0, t_{k+1}, \dots, t_{n+m}) : t_i\in [0,1]\}$$
and
$$ F'_k = \{(t_1, t_2, \dots, t_{k-1}, 1, t_{k+1}, \dots, t_{n+m}) : t_i\in[0,1]\}.$$

\begin{maintheorem}\label{thm:lowerbound}
Let $f\colon Q_0 \rightarrow Y$ be a continuous map into a metric space. Then
\begin{equation}\label{eq:lowerbound}
 \HH^{n,m}_\infty(f,Q_0) \geq \prod_{k=1}^n \dist_{Y}(f(F_k), f(F'_k)).
\end{equation}
Moreover, the same lower bound holds for the quantity $\hat{\HH}^{n,m}_\infty$ defined in \eqref{eq:arbdef}:
\begin{equation}\label{eq:lowerbound2}
 \hat{\HH}^{n,m}_\infty(f,Q_0) \geq \prod_{k=1}^n \dist_{Y}(f(F_k), f(F'_k)).
\end{equation}
\end{maintheorem}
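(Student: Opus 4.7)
My plan is to reduce to Kinneberg's theorem slice-by-slice in the transverse $y$-directions and then integrate, with the $\side(Q_i)^m$ weight in the definition of mapping content exactly absorbing the Fubini constant.

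Write $a_k = \dist_Y(f(F_k), f(F'_k))$ for $k = 1, \ldots, n$. First I would apply \cite[Corollary 4.2]{kinneberg2016discrete} on each horizontal slice $R_y := [0,1]^n \times \{y\}$ for $y \in [0,1]^m$. Define $f_y \colon [0,1]^n \to Y$ by $f_y(x) = f(x,y)$; this is continuous. Since the $k$-th pair of opposite faces of $[0,1]^n$ is carried by $f_y$ into $f(F_k)$ and $f(F'_k)$ respectively, we have $\dist_Y(f_y(\{x_k=0\}), f_y(\{x_k=1\})) \geq a_k$. Kinneberg's result then yields the slicewise bound $\cH^n_\infty(f(R_y)) \geq \prod_{k=1}^n a_k$ for every $y \in [0,1]^m$.

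Next I would pass from this slicewise bound to a covering bound. Let $\{Q_i\}$ be any dyadic (or, for \eqref{eq:lowerbound2}, axis-aligned) covering of $Q_0$ with side lengths $\sigma_i := \side(Q_i)$. For each fixed $y$, those cubes $Q_i$ meeting $R_y$ cover $R_y$, so subadditivity of Hausdorff content gives
$$\prod_{k=1}^n a_k \leq \cH^n_\infty(f(R_y)) \leq \sum_{i : Q_i \cap R_y \neq \emptyset} \cH^n_\infty(f(Q_i)).$$
Integrating in $y$ over $[0,1]^m$ and interchanging sum and integral, the right-hand side becomes $\sum_i \cH^n_\infty(f(Q_i)) \cdot \mathcal{L}^m(\{y : Q_i \cap R_y \neq \emptyset\}) = \sum_i \cH^n_\infty(f(Q_i))\, \sigma_i^m$, since for an axis-aligned cube $Q_i$ of side $\sigma_i$ the set of transverse slices it meets is an $m$-cube of side $\sigma_i$. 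Taking the infimum over coverings yields both \eqref{eq:lowerbound} and \eqref{eq:lowerbound2} in one stroke.

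The only delicate ingredient is the slicewise invocation of Kinneberg; the required inequality $\dist_Y(f_y(\{x_k=0\}), f_y(\{x_k=1\})) \geq a_k$ is immediate from the fact that each slice face is contained in the corresponding full face of $Q_0$, so this step poses no real obstacle. Everything after that is a clean Fubini-style accounting, and the argument is completely insensitive to whether the covers in question are dyadic or merely axis-aligned, which is why the same bound applies to both $\cH^{n,m}_\infty$ and $\hat{\cH}^{n,m}_\infty$.
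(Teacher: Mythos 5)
Your argument is correct and takes a genuinely different route from the paper. You apply Kinneberg's \cite[Corollary 4.2]{kinneberg2016discrete} to each slice map $f_y$ and then integrate in $y$, so that the $\side(Q_i)^m$ weight in the mapping content emerges as the Lebesgue measure of $\text{Proj}_y(Q_i)$ via a routine Tonelli interchange. The paper instead invokes Kinneberg's more general chain-connectivity result (Proposition~\ref{prop:kinneberg}, his Proposition 4.1) a single time, applied to the auxiliary map $h(x,y)=(f(x,y),y)$ into $Y\times[0,1]^m$, with a cover by product sets $V^i_j=U^i_j\times N_\eta(\text{Proj}_y(S_i))$ and two families of weights: the diameters $\diam(U^i_j)$ in the first $n$ directions and $\diam(S_i)+2\eta$ in the last $m$. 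Both proofs ultimately rest on Kinneberg, but yours is more elementary in that it uses only the already-packaged Corollary 4.2 plus Fubini, trading the paper's careful bookkeeping of weights on the product space for a slicewise argument.

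One inaccuracy should be repaired before the argument can stand alone: the covers defining $\hat{\HH}^{n,m}_\infty$ in \eqref{eq:arbdef} are by \emph{arbitrary} subsets $\{S_i\}$ of $Q_0$, not by axis-aligned cubes. Your Fubini step still goes through, because the set $\{y : S_i\cap R_y\neq\emptyset\}=\text{Proj}_y(S_i)$ has diameter at most $\diam(S_i)$, hence is contained in an $m$-cube of side $\diam(S_i)$ and so has Lebesgue measure at most $\diam(S_i)^m$. Once that one-line observation is inserted, your integration yields $\prod_{k=1}^n a_k \leq \sum_i \HH^n_\infty(f(S_i))\diam(S_i)^m$ for every cover, and taking infima gives both \eqref{eq:lowerbound} and \eqref{eq:lowerbound2} as claimed.
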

The quantity  $\hat{\HH}^{n,m}_\infty$ is a variant of $\HH^{n,m}_\infty$ introduced in \cite[Section 1.4.2]{David-Schul:harder-sarder} that is more convenient in some circumstances; see \eqref{eq:arbdef} below.

\begin{remark}
In the case $m=0$, we have $\HH^{n,m}_\infty(f,Q_0)=\HH^{n}_\infty(f(Q_0))$, so this agrees with \cite[Corollary 4.2]{kinneberg2016discrete}. Indeed, our proof follows the same path as Kinneberg's proof, including relying on his \cite[Proposition 4.1]{kinneberg2016discrete}.
\end{remark}
\begin{remark}
Of course, the fact that we use only the first $n$ pairs of sides in Theorem \ref{thm:lowerbound} is irrelevant, as one can always reduce to this case.
\end{remark}

Theorem \ref{thm:lowerbound} is of interest in its own right, providing a simple lower bound for mapping content which then implies the existence of Hard Sard sets as described above. In this paper, it will be used to prove a continuity statement for mapping content.

For this, we will use the following notion of distance for Lipschitz maps into arbitrary metric spaces, which is essentially an ad hoc notion of Gromov-Hausdorff convergence which suffices for our purposes.

\begin{definition}
Let $f\colon Q_0 \rightarrow Y$ and $g\colon Q_0 \rightarrow Z$ be continuous functions onto metric spaces $Y$ and $Z$. For $\epsilon>0$, we will say that $\dist(f,g)<\epsilon$ if there are isometric embeddings
$$ \iota_Y: Y \rightarrow \ell^\infty \text{ and } \iota_Z: Z \rightarrow \ell^\infty$$
such that
$$ \sup_{x\in Q_0} \| \iota_Y(f(x)) - \iota_Z(g(x))\|_\infty <\epsilon.$$
\end{definition}
\begin{remark}
To be concrete and avoid technicalities, we assume in the definition of $\dist$ that $Y$ and $Z$ are the full images of $f$ and $g$, respectively, hence the word ``onto'' in the definition. Of course, one can always reduce to this case.
\end{remark}

Our continuity statement for mapping content is then the following result.

\begin{maintheorem}\label{thm:contentzero}
Let $Q_0=[0,1]^{n+m}$. Let $f_i\colon Q_0 \rightarrow Y_i$ be a sequence of $1$-Lipschitz maps onto metric spaces $Y_i$, and $f\colon Q_0 \rightarrow Y$ another $1$-Lipschitz map onto a metric space $Y$.
Assume that $\dist(f_i, f) \rightarrow 0$ as $n\rightarrow \infty$. Then
$$ \HH^{n,m}_\infty(f,Q_0)=0 \text{ if and only if } \HH^{n,m}_\infty(f_i, Q_0)\rightarrow 0 \text{ as } i\rightarrow \infty.$$

\end{maintheorem}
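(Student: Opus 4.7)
I would separate the argument into the two implications. For the forward direction ($\HH^{n,m}_\infty(f, Q_0)=0 \Rightarrow \HH^{n,m}_\infty(f_i, Q_0)\to 0$), I plan to use upper semi-continuity of the Hausdorff content $\HH^n_\infty$ under Hausdorff convergence of compact sets in $\ell^\infty$. Given $\epsilon > 0$, I first pick a finite dyadic cover $\cP$ of $Q_0$ with $\sum_{Q\in\cP}\HH^n_\infty(f(Q))\side(Q)^m<\epsilon/2$; for each $Q\in\cP$, I cover the compact $\iota_Y(f(Q))\subset \ell^\infty$ by finitely many open balls $\{S_j^Q\}$ with $\sum_j\diam(S_j^Q)^n\leq \HH^n_\infty(f(Q))+\eta_Q$, choosing the $\eta_Q$'s so that $\sum_Q\eta_Q\side(Q)^m<\epsilon/2$. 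Since $\dist(f_i,f)\to 0$ implies Hausdorff convergence $\iota_{Y_i}(f_i(Q))\to \iota_Y(f(Q))$, compactness forces $\iota_{Y_i}(f_i(Q))\subset\bigcup_j S_j^Q$ for $i$ sufficiently large, and summing gives $\HH^{n,m}_\infty(f_i,Q_0)<\epsilon$.

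The reverse direction is the heart of the theorem, and I would use Theorem \ref{thm:lowerbound} in an essential way. Monotonicity yields $\HH^{n,m}_\infty(f_i,R)\leq \HH^{n,m}_\infty(f_i,Q_0)\to 0$ for every dyadic subcube $R\subset Q_0$. Applying Theorem \ref{thm:lowerbound} to $f_i$ restricted to $R$, rescaled to the unit cube, I obtain for any $n$-subset $I$ of coordinate directions
$$\HH^{n,m}_\infty(f_i,R)\geq \side(R)^m\prod_{k\in I}\dist_{Y_i}(f_i(F_k(R)),f_i(F'_k(R))).$$
Face-distances are continuous under the $\dist$ convergence (using $|\dist(f_i(A),f_i(B))-\dist(f(A),f(B))|\leq 2\dist(f_i,f)$), so letting $i\to\infty$ forces
$$\prod_{k\in I}\dist_Y(f(F_k(R)),f(F'_k(R)))=0$$
for every dyadic $R\subset Q_0$ and every $n$-subset $I$ — meaning at most $n-1$ of the $n+m$ face-pair distances are positive on any subcube.

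To conclude $\HH^{n,m}_\infty(f,Q_0)=0$, I would use the near-optimal covers $\cP_i$ witnessing $\HH^{n,m}_\infty(f_i,Q_0)\to 0$ and convert them into covers for $f$. For each $Q\in\cP_i$, cover $\iota_{Y_i}(f_i(Q))$ in $\ell^\infty$ by open balls with $\sum\diam^n$ near $\HH^n_\infty(f_i(Q))$, and thicken each ball by $\mu_i:=\dist(f_i,f)$; by $\dist(f_i,f)\to 0$ the thickened balls cover $\iota_Y(f(Q))$. The thickening increases $\sum\diam^n$ by at most $CN_Q(2\mu_i)^n$, where $N_Q$ is the number of balls used for $f_i(Q)$. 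The main obstacle is controlling $N_Q$: I would do this by imposing a minimum ball diameter $r_i$ (comparable to $\mu_i^{1/2}$) and merging smaller balls at modest cost, using a packing estimate of the form $N_Q\leq C\HH^n_\infty(f_i(Q))/r_i^n+C$ that relies crucially on the face-distance constraints of the previous paragraph to ensure $f_i(Q)$ behaves essentially as an $n$-dimensional set. This yields $\HH^{n,m}_\infty(f,Q_0)\leq C\HH^{n,m}_\infty(f_i,Q_0)+o(1)\to 0$ as $i\to\infty$. The bookkeeping around this thickening/merging procedure is the main technical hurdle I anticipate.
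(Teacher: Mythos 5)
Your forward implication is correct, and it is actually a genuinely different (and softer) argument than the paper's. You are essentially proving upper semi-continuity of $\HH^{n,m}_\infty$ under $\dist$-convergence: a near-optimal finite cover for $f$ can be thickened by $\dist(f_i,f)$ to give a competitor cover for $f_i$ with nearly the same content. Two small points need care: the isometric embeddings $\iota_Y,\iota_{Y_i}$ in the definition of $\dist$ are allowed to change with $i$, so you should either work intrinsically in $Y$ and push forward by the $i$-th embedding, or note that the Hausdorff content of $f(Q)$ is preserved by isometry; and you must first reduce to a \emph{finite} cover $\cP$ (truncating the tail of an infinite dyadic cover, absorbing it with the crude bound $\HH^n_\infty(f_i(Q))\side(Q)^m \lesssim |Q|$) so that the per-cube error $N_Q\cdot O(\mu_i)$ is summable. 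With those fixes, the argument works and avoids the metric-differentiation machinery the paper uses for this direction.

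The reverse implication is where your proposal breaks down, and I believe the gap is not mere bookkeeping. Your step 1 is fine: Theorem~\ref{thm:lowerbound} plus the face-distance continuity estimate does give $\prod_{k\in I}\dist_Y(f(F_k(R)),f(F'_k(R)))=0$ for every dyadic $R$ and every size-$n$ index set $I$. But this is only a \emph{necessary} condition for $\HH^{n,m}_\infty(f,Q_0)=0$ (it is just what Theorem~\ref{thm:lowerbound} says), and nothing in the paper or in your argument shows it is sufficient; whether it is is, as far as I can tell, open. Your attempt to bridge the gap via a thickening/merging cover-transfer and the packing bound $N_Q\leq C\HH^n_\infty(f_i(Q))/r_i^n + C$ is the real problem. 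That inequality is a form of lower Ahlfors-regularity for $f_i(Q)$, and it fails in general: Hausdorff content gives no control on the number of balls of a prescribed radius needed to cover a set (already $\HH^1_\infty(\{k/N : 0\le k\le N\})=0$ while $N+1$ balls of radius $<1/(2N)$ are required). The face-distance constraints on $f$ do not transfer this kind of regularity to $f_i(Q)$. More fundamentally, what you are trying to prove is a lower semi-continuity statement for $\HH^{n,m}_\infty$, which is false for Hausdorff-type contents in general; it holds here only because of the rigid structure of Lipschitz maps from cubes. The paper's proof gets this structure from Lemma~\ref{lem:goodcube} (which rests on the quantitative metric differentiation theorem of Azzam--Schul, Theorem~\ref{thm:quantdiff}): from $\HH^{n,m}_\infty(f_i,Q_0)\ge\delta$ one extracts a cube $Q_i$ of \emph{definite} size on which $f_i$ is $\eta$-approximated by a seminorm that is $c$-nondegenerate on some $n$-plane; for $i$ large the same seminorm approximates $f$ on $3Q_i$, and Corollary~\ref{cor:positivecontent} (which is where Theorem~\ref{thm:lowerbound} is actually used) then forces $\HH^{n,m}_\infty(f,Q_i)\gtrsim |Q_i|>0$, a contradiction. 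Your proposal uses neither Lemma~\ref{lem:goodcube} nor Corollary~\ref{cor:positivecontent}, and I do not see how to complete it without some input of that kind.
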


Theorem \ref{thm:contentzero} will then combine with Theorem \ref{thm:EGH} to prove Theorem \ref{thm:main} by a compactness argument.

\subsection*{Acknowledgments}
The authors thank Behnam Esmayli and Piotr Haj\l asz for comments, and for sharing an early draft of \cite{EGH-personal}.

\section{Preliminaries}\label{sec:prelim}

\subsection{Dyadic cubes}

We write $Q_0$ for the unit cube in $\RR^{d}$, with $d$ generally understood from context, i.e.,
$$ Q_0 = [0,1]^{d}.$$
We write $\Delta$ for the collection of all dyadic cubes $Q\subseteq Q_0$, and $\Delta_k$ for the collection of those dyadic cubes with side length $2^{-k}$. 

If $Q\in\Delta$, we write $\side(Q)$ for the side-length of $Q$. Thus, $\side(Q)=2^{-k}$ if and only if $Q\in\Delta_k$.

If $Q\in\Delta$ and $C>0$, we write $CQ$ for a cube with the same center but $C$ times the side length. In particular, if $C$ is an odd positive integer, then $CQ$ is a union of $C^d$ distinct cubes of the same side length as $Q$.

Lastly, we call a collection of cubes ``almost-disjoint'' if they have disjoint interiors. Such collections arise in the definition of $\HH^{n,m}_\infty$.

\subsection{Metric derivatives}\label{subsec:metricderivative}

Let $X$ be a metric space and $f\colon \RR^d \rightarrow X$ a $1$-Lipschitz function. We will use  results from \cite{David-Schul:harder-sarder}, which rely on results and notation from \cite{AS14}, which were in turn inspired by the idea of metric differentiability in \cite{Kirchheim}.

For a cube $Q\subseteq \RR^d$ let
$$\md_f(Q) :=\frac{1}{\side(Q)} \inf _{\| \cdot\|} \sup_{x, y \in Q} \left| d(f(x),f(y)) - \|x-y\|\right|,$$
where the infimum is taken over all seminorms $\|\cdot\|$ on $\RR^d$. If the function $f$ is understood, we will simply write $\md(Q)$.
The quantity $\md_f(Q)$ measures how well the pullback of the distance in $X$ under $f$ can be approximated by a seminorm in $Q$. For metric space valued functions, it serves as a replacement for measuring ``deviation from linearity''. This quantity is appealing because of the result of \cite{AS14}, which is a quantitative differentiation result for Lipschitz mappings into metric spaces:
\begin{theorem}[\cite{AS14}, Theorem 1.1]\label{thm:quantdiff}
Let $X$ be a metric space and $f\colon \RR^d \rightarrow X$ a $1$-Lipschitz function. Let $\epsilon>0$ and $C_0>0$. Then
$$ \sum \{ |Q| : Q\in\Delta, \md_f(C_0 Q) > \epsilon\} \leq C_{\epsilon,d}.$$
The constant $C_{\epsilon,d}$ depends only on $\epsilon$, $C_0$, and $d$ but not on the space $X$ or the function $f$.
\end{theorem}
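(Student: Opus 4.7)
The plan is to establish a Carleson-type bound on the deviation $\md_f$ by means of a compactness and stopping-time argument in the compact space $\cS$ of $1$-Lipschitz seminorms on $\RR^d$. First I would reduce to the case $X = \ell^\infty$ via the Kuratowski--Fr\'echet isometric embedding; since $\md_f$ depends only on the pullback pseudometric $(x,y) \mapsto d(f(x), f(y))$ on $\RR^d$, this reduction loses nothing.

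For each dyadic cube $Q$, I would assign a seminorm $\sigma_Q \in \cS$ that nearly realizes the infimum in the definition of $\md_f(C_0 Q)$. The crucial technical step is an \emph{almost-monotonicity lemma}: if $Q' \subsetneq Q$ is a dyadic descendant and both $\md_f(C_0 Q)$ and $\md_f(C_0 Q')$ are at most $\epsilon$, then $\|\sigma_Q - \sigma_{Q'}\|_{C(\bS^{d-1})} \leq K\epsilon$ for an absolute constant $K$. This is obtained by evaluating both seminorms on pairs $x-y$ with $x,y \in C_0 Q'$: both values approximate $d(f(x),f(y))$ to within $O(\epsilon \cdot \side(Q'))$, and the normalized differences $(x-y)/|x-y|$ cover the unit sphere as $(x,y)$ ranges over $C_0 Q'$ with $|x-y|$ of order $\side(Q')$.

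With this lemma in place, I would run a Corona-type decomposition of the dyadic tree into coherent subtrees on which the assigned seminorm $\sigma_Q$ stays near a fixed net point in $\cS$; outside these subtrees, the large $\md_f$ values at the stopping cubes can be charged to transitions in the net-label, whose occurrences along any chain are controlled by the $\epsilon$-covering number $N(\epsilon,d)$ of $\cS$ together with the monotonicity lemma. Counting the total volume of stopping cubes against $N(\epsilon, d)$ yields the desired bound $\sum\{|Q| : Q \in \Delta,\ \md_f(C_0 Q) > \epsilon\} \leq C(\epsilon, C_0, d)$.

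The main obstacle is the Corona decomposition itself. In a Hilbert-valued setting one would prove a quadratic Carleson estimate $\sum_Q \md_f(C_0 Q)^2 |Q| \lesssim 1$ via a Bessel-type inequality and conclude by Chebyshev. In the metric-valued setting, no such inner-product argument is available, so the Corona decomposition must be executed directly in the compact but non-linear space $\cS$, with special care to ensure that the slow drift of $\sigma_Q$ along long chains---which can eventually exit any fixed neighborhood of a net point even without a single bad intermediate cube---is charged correctly to bad cubes and not overcounted. A subsidiary issue is the interaction of cubes with their $C_0$-enlargements, which introduces bounded multiplicity and contributes a factor of $C_0^d$ to the final constant.
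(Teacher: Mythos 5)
This theorem is quoted from Azzam--Schul \cite{AS14}; the present paper contains no proof of it (it is a cited black box used via Lemma~\ref{lem:goodcube}), so there is nothing in-paper to compare your attempt against. Evaluating the proposal on its own terms, there is a genuine gap.

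Your almost-monotonicity lemma is incorrect as stated, and the stated reason for it is wrong. You claim that for $x,y\in C_0Q'$ both $\sigma_Q(x-y)$ and $\sigma_{Q'}(x-y)$ approximate $d(f(x),f(y))$ to within $O(\epsilon\,\side(Q'))$. But the error in the inequality defining $\md_f(C_0Q)<\epsilon$ is normalized by $\side(C_0Q)$, so $\sigma_Q$ approximates $d(f(x),f(y))$ only to within $O(\epsilon\,\side(Q))$, which is $2^k$ times larger than $O(\epsilon\,\side(Q'))$ when $Q'$ lies $k$ generations below $Q$. Since the relevant vectors $x-y$ have $|x-y|\lesssim \side(Q')$, dividing out gives $\|\sigma_Q-\sigma_{Q'}\|_{C(\bS^{d-1})}\lesssim \epsilon\,2^k$, not $\lesssim\epsilon$. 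The lemma therefore holds only for descendants a bounded number of generations deep (e.g.\ children), not for arbitrary $Q'\subsetneq Q$.

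Even with the corrected parent--child version, the proposal does not close. Along a chain of $N$ consecutive good cubes the assigned seminorm can wander distance $\sim N\epsilon$ in $\cS$ without any bad cube occurring, so the seminorm labels of a stopping-time tree are not controlled by the covering number of $\cS$ in the way you describe; you cannot simply charge net-label transitions to bad cubes. You flag this drift as ``the main obstacle,'' which is correct, but the proposal ends there: no mechanism is given to convert drift into a Carleson packing bound, and the Hilbert-space Bessel argument you mention as a fallback is precisely what is unavailable for general metric targets. In short, the step you defer is the step that contains the entire content of the theorem. The argument actually used in \cite{AS14} takes a different, more elementary route that avoids a Corona decomposition in the nonlinear space of seminorms; if you want to pursue your approach, the place to start is to replace the global almost-monotonicity claim by a one-generation version and then prove, rather than posit, a packing estimate that controls cumulative drift.
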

Here $|Q|$ refers to the $d$-dimensional Lebesgue measure of $Q$. While we do not use Theorem \ref{thm:quantdiff} directly below, it is behind the proof of Lemma \ref{lem:goodcube} from \cite{David-Schul:harder-sarder}.

For a cube $Q$, we let $\|\cdot\|_Q$ denote a seminorm that gives rise to the infimum in the definition above of $\md$ (repressing $f$ in the notation).
The existence of a minimizer follows from the observation that the norm is determined by its unit ball, and such sets are compact under the Hausdorff metric.

\subsection{Content, measure, and variations}\label{subsec:content}
We use $|E|$ to denote the Lebesgue measure of a subset $E$ of some Euclidean space, with the dimension understood from context.

For a subset $A$ of a metric space $X$, the $k$-dimensional Hausdorff content $\HH^k_\infty(A)$ is a well-used notion (see, e.g., \cite[Chapter 8]{He}) and is defined as follows:
$$ \HH^k_\infty(A) = \inf\sum_{U\in\mathcal{U}} \diam(U)^k,$$
where the infimum is taken over all open covers $\mathcal{U}$ of $A$. (Small modifications of the definition, e.g. to use covers by balls or arbitrary sets, yield comparable quantities.) The quantity $\HH^k_\infty$ is countably sub-additive, but not in general a measure.

The notion of mapping content $\HH^{n,m}_\infty$ was defined above in Definition \ref{def:mappingcontent}. As discussed in \cite{AS12, David-Schul:harder-sarder}, $\HH^{n,m}_\infty$ serves in some sense as a ``coarse'' substitute for the $L^1$-norm of the $n$-dimensional Jacobian of $f$. 

In \cite[Section 1.4.2]{David-Schul:harder-sarder}, a variation of mapping content was defined that uses arbitrary sets in the cover, rather than dyadic cubes.

\begin{equation}\label{eq:arbdef}
\hat{\HH}^{n,m}_\infty(f,A) = \inf \sum_i \HH^{n}_\infty(f(S_i))\diam(S_i)^m,
\end{equation}
where the infimum is over all countable covers $\{S_i\}$ of $A$ by \textit{arbitrary} subsets of $Q_0$. It is immediately clear that
\begin{equation}\label{eq:arbcontent}
\hat{\HH}^{n,m}_\infty(f,A) \lesssim_{n,m} \HH^{n,m}_\infty(f,A),
\end{equation}

As discussed in \cite{David-Schul:harder-sarder}, we do not know if the reverse inequality holds in \eqref{eq:arbcontent}. We do however know that if $\hat{\HH}^{n,m}_\infty(f,A)=0$ then
$\HH^{n,m}_\infty(f,A)=0$.  Indeed we have the following more quantitative statement.

\begin{theorem}[Corollary E of \cite{David-Schul:harder-sarder}]\label{cor:arbcontent}
For each $\delta>0$, there is a $\delta'>0$ with the following property:

If $f:Q_0\rightarrow X$ is a $1$-Lipschitz mapping into a metric space, and $A\subseteq Q_0$ has
$$ \HH^{n,m}_\infty(f,A) \geq \delta,$$
then
$$ \hat{\HH}^{n,m}_\infty(f,A) \geq \delta'.$$
The number $\delta'$ depends only on $\delta$, $n$, and $m$.
\end{theorem}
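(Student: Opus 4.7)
The forward direction of Theorem \ref{cor:arbcontent} is trivial from \eqref{eq:arbcontent}; the content of the theorem is the quantitative reverse, equivalent to showing that if $\hat{\HH}^{n,m}_\infty(f,A)$ is small then so is $\HH^{n,m}_\infty(f,A)$. My plan is to prove this contrapositive: starting from a near-optimal arbitrary cover of $A$, I would "dyadify" it to produce a dyadic cover with comparable cost. The naive attempt of replacing each $S_i$ with a single dyadic cube $\tilde Q_i \supseteq S_i$ of side $\sim \diam(S_i)$ fails, because $\HH^n_\infty(f(\tilde Q_i))$ can be much larger than $\HH^n_\infty(f(S_i))$ (for example, when $S_i$ is a thin slice whose image is low-dimensional but $f(\tilde Q_i)$ spreads in all directions).

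The fix is to cover $S_i$ at a \emph{finer} scale and to exploit the Lipschitz property. Given an arbitrary cover $\{S_i\}$ realizing $\sum_i \HH^n_\infty(f(S_i)) \diam(S_i)^m \le 2 \hat{\HH}^{n,m}_\infty(f,A)$, set $d_i = \diam(S_i)$ and choose a parameter $\lambda_i \in (0,1]$. Cover $S_i$ by a collection $\{Q_{i,j}\}_j$ of dyadic cubes of side $s_i \sim \lambda_i d_i$, with cardinality at most $C_{n,m}\lambda_i^{-(n+m)}$. Since $f$ is $1$-Lipschitz and each $Q_{i,j}$ meets $S_i$, we have $f(Q_{i,j}) \subseteq N_{Cs_i}(f(S_i))$. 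A neighborhood estimate of the form
$$\HH^n_\infty(N_r(B)) \le C_n\, \HH^n_\infty(B) + C_n r^n\, \#\{\text{$r$-balls covering } B\}$$
(derived by thickening a near-optimal cover of $B$ by at most $r$ in each diameter) gives a bound on $\HH^n_\infty(f(Q_{i,j}))$ in terms of $\HH^n_\infty(f(S_i))$ plus an error controlled by the covering number. Summing and multiplying by $s_i^m$ produces a dyadic total of order
$$\sum_i C_n \lambda_i^{-n}\,\HH^n_\infty(f(S_i))\,d_i^m \; + \; (\text{thickening error}),$$
and choosing $\lambda_i$ adaptively (small when the original $\HH^n_\infty(f(S_i))$ is much smaller than $d_i^n$, of order $1$ otherwise) balances the two contributions and yields a bound $\Phi(\eta,n,m) \to 0$ as $\eta \to 0$.

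The main obstacle is making the neighborhood estimate quantitatively usable: Hausdorff content of a thin neighborhood is not in general comparable to that of the underlying set, so one must simultaneously track the diameters and the counts of the covering sets realizing $\HH^n_\infty(f(S_i))$, and prevent the "thickening error" from being dominated by degenerate cases where $f(S_i)$ has many small components. A cleaner alternative, and the one most consistent with the methods of \cite{AS14, David-Schul:harder-sarder}, is to invoke quantitative metric differentiation (Theorem \ref{thm:quantdiff}) to restrict attention to dyadic cubes on which $\md_f$ is small; on such "good" cubes $f$ is close to a seminorm map, so $\HH^n_\infty$ of images is essentially an elementary function of the seminorm's $n$ largest singular values and the geometry of the subset, which makes the arbitrary and dyadic contents comparable term-by-term. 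The total $|Q|$ of "bad" cubes is controlled by Theorem \ref{thm:quantdiff} and contributes negligibly, so combining the good-cube analysis with a stopping-time selection completes the quantitative passage from $\hat{\HH}^{n,m}_\infty$ to $\HH^{n,m}_\infty$.
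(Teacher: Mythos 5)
First, note that the present paper does not prove this statement; it is quoted verbatim as Corollary E of \cite{David-Schul:harder-sarder}, so there is no proof here to compare against. I will therefore evaluate the proposal on its own terms.

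Your first approach has a genuine gap that you only partly acknowledge, and it is fatal as written. The ``neighborhood estimate'' $\HH^n_\infty(N_r(B)) \le C_n \HH^n_\infty(B) + C_n r^n \#\{\text{$r$-balls covering }B\}$ is not available from the data you control: thickening a near-optimal cover $\{U_k\}$ of $B$ gives $\HH^n_\infty(N_r(B)) \le \sum_k(\diam U_k + 2r)^n$, and the error $r^n\cdot\#\{U_k\}$ depends on the cardinality of the cover realizing $\HH^n_\infty(B)$, which can be infinite and is in no way a function of $\HH^n_\infty(B)$. What one would really need is an $r$-coarse content, and that is not comparable to $\HH^n_\infty$. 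No adaptive choice of $\lambda_i$ can repair a term that is genuinely uncontrolled by the given quantities.

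Your second approach is of the right flavor --- quantitative metric differentiation is indeed the engine behind this kind of statement in \cite{David-Schul:harder-sarder} --- but as written it is a sketch whose crucial step is asserted rather than proved. The claim that on good cubes the ``arbitrary and dyadic contents [are] comparable term-by-term'' conceals the real work: the sets $S_i$ in an arbitrary cover do not align with dyadic cubes, and even where $f$ is $\eta$-close to a seminorm, $\HH^n_\infty(f(S_i))$ depends on the shape of $S_i$ relative to the seminorm rather than just on $\diam S_i$, so a term-by-term passage is not free. The argument that actually closes, and which the present paper effectively displays in Section 5, runs instead as follows: from $\HH^{n,m}_\infty(f,A)\ge\delta$ one extracts, via (a version of) Lemma \ref{lem:goodcube}, a single cube $Q$ of definite size $\ge s_0(n,m,\delta)$ on which $f$ is $\eta$-close to a seminorm whose $n$-th singular value is bounded below by $c(n,m,\delta)$, and then one lower-bounds $\hat\HH^{n,m}_\infty(f,Q)\gtrsim|Q|\gtrsim s_0^{n+m}$ directly. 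That last lower bound on $\hat\HH^{n,m}_\infty$ over a good cube is the nontrivial ingredient --- in the present paper it is Corollary \ref{cor:positivecontent}, which rests on the new Kinneberg-type Theorem \ref{thm:lowerbound} --- and it is precisely what your sketch omits. Without some substitute for it, the ``good cube plus stopping time'' outline does not yield the theorem.
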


\section{Facts about seminorms}
Here we collect some lemmas about seminorms that we need, the main goal being Lemma \ref{lem:svd} below.

\begin{lemma}\label{lem:seminorm}
Let $\|\cdot\|$ be a seminorm on $\RR^{n+m}$. Then there is a linear map $A\colon \RR^{n+m} \rightarrow \RR^{n+m}$ such that $|Av| \approx \|v\|$ for all $v\in \RR^{n+m}$. The implied constant depends only on $n$, $m$.
\end{lemma}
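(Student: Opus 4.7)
The plan is to reduce to the case of a genuine norm by factoring out the degeneracy subspace of the seminorm, and then invoke John's ellipsoid theorem (or just equivalence of norms in finite dimension).

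First, I would set $K := \{v \in \RR^{n+m} : \|v\| = 0\}$. The seminorm axioms (subadditivity and absolute homogeneity) immediately imply that $K$ is a linear subspace. Let $V := K^{\perp}$ denote its orthogonal complement with respect to the standard Euclidean inner product, and let $P \colon \RR^{n+m} \to V$ be the associated orthogonal projection. Then for any $v \in \RR^{n+m}$ we may write $v = Pv + (v - Pv)$ with $v - Pv \in K$, and the triangle inequality together with $\|v - Pv\| = 0$ gives
$$ \|v\| = \|Pv\|. $$
Thus the seminorm on $\RR^{n+m}$ is determined by its restriction to $V$.

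Next, observe that the restriction of $\|\cdot\|$ to $V$ is a genuine norm, since $V \cap K = \{0\}$. Because any two norms on a finite-dimensional vector space are equivalent with constants depending only on the dimension (this is an easy consequence of compactness of the unit sphere, or one may invoke John's ellipsoid theorem if one prefers an explicit ellipsoidal approximation), there is a linear isomorphism $B \colon V \to V$ and a constant $c = c(n+m) \geq 1$ such that
$$ c^{-1} |Bw| \leq \|w\| \leq c\,|Bw| \quad \text{for every } w \in V. $$

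Finally, I would define $A := B \circ P \colon \RR^{n+m} \to V \subseteq \RR^{n+m}$. Combining the two displays yields
$$ |Av| = |B(Pv)| \approx \|Pv\| = \|v\|, $$
with implicit constant depending only on $\dim V \leq n+m$. This gives the claim. There is no real obstacle here: the only subtlety is the possibly nontrivial kernel of the seminorm, which the orthogonal decomposition $\RR^{n+m} = K \oplus V$ handles cleanly, after which John's theorem (or norm equivalence) immediately produces a dimension-dependent comparability constant.
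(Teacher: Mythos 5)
Your proof is correct and follows essentially the same route as the paper: project orthogonally onto the complement of the seminorm's kernel, note that the restriction is a genuine norm, and apply John's ellipsoid theorem to get a linear map comparing it to the Euclidean norm with a constant depending only on the dimension. One small caution on wording: the parenthetical claim that ``any two norms on a finite-dimensional space are equivalent with constants depending only on the dimension'' is false as stated (the constants depend on the norms, not just the dimension) and is not what compactness of the unit sphere gives you; what you actually need, and what you correctly invoke next, is John's theorem, which says that \emph{after a linear change of variables} a norm is comparable to the Euclidean one with constant $\dim V \leq n+m$. With that phrasing tightened, the argument matches the paper's.
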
%
\begin{proof}
Let 
$$ V = (ker(\|\cdot\|))^\bot = \left(\{ v : \|v\|=0\}\right)^\bot,$$
a subspace of $\RR^{n+m}$ with some dimension $k\in\{0,\dots, n+m\}$. 

Let $A_0\colon \RR^{n+m} \rightarrow \RR^{n+m}$ be the orthogonal projection onto $V$. 
Notice that $\|A_0w\|=\|w\|$ for all $w\in \RR^{n+m}$, and that $\|\cdot\|$ is a a norm when restricted to $V$.

Let $K\subset V$ be the unit ball of $\|\cdot\|$.  A theorem of F. John says (see e.g. \cite{ball1992ellipsoids})
that there is an ellipsoid $E\subset K$ and such that 
$K\subset kE$, where $k$ is the dimension of $V$.
Let $T$ be the linear map taking $E$ to the unit ball of $\RR^k$, viewed as embedded in $\RR^{n+m}$.
Then for any unit vector $v\in V$ we have $1\leq |T(v)|\leq k$, and thus,
setting $A=T\circ A_0$ we have  for all $w\in \RR^{n+m}$ that  $\|w\|=\|A_0w\|\leq |Aw|\leq k\|A_0w\|=k\|w\|$.
Since $k\leq n+m$ this completes the proof. 
\end{proof}

The following fact about seminorms is now a consequence of Lemma \ref{lem:seminorm} and the singular value decomposition of matrices.

\begin{lemma}\label{lem:svd}
Let $\|\cdot\|$ be a seminorm on $\RR^{n+m}$. Suppose that there is an $n$-dimensional subspace $P\subseteq \RR^{n+m}$ and a constant $\delta>0$ such that 
$$ \|v\| \geq \delta |v| \text{ for all } v\in P.$$

Then there is an orthonormal basis $\{v_1, \dots, v_{n+m}\}$ of $\RR^{n+m}$ such that
$$ \left\|\sum_{i=1}^{n+m} a_i v_i\right\| \gtrsim \delta \sum_{i=1}^n |a_i|$$
for all choices of $a_1, \dots, a_{n+m}$ in $\RR$. The implied constant depends only on $n$, $m$.
\end{lemma}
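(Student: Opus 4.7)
The plan is to combine Lemma \ref{lem:seminorm} with the singular value decomposition. First I would invoke Lemma \ref{lem:seminorm} to obtain a linear map $A\colon \RR^{n+m}\to\RR^{n+m}$ satisfying $|Av|\approx \|v\|$ with constants depending only on $n,m$. The hypothesis then translates into $|Av|\gtrsim \delta |v|$ for every $v\in P$, i.e., $A$ acts with lower bound $\gtrsim \delta$ on the $n$-dimensional subspace $P$.

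Next I would take the singular value decomposition $A = U\Sigma V^{T}$, where $U,V$ are orthogonal and $\Sigma = \operatorname{diag}(\sigma_1,\dots,\sigma_{n+m})$ with $\sigma_1\geq\sigma_2\geq\dots\geq\sigma_{n+m}\geq 0$. Let $v_1,\dots,v_{n+m}$ denote the columns of $V$; these form an orthonormal basis of $\RR^{n+m}$, and they will be the basis produced by the lemma. The key claim is that
\[ \sigma_n \gtrsim \delta, \]
with implied constant depending only on $n,m$. To prove this, let $W=\operatorname{span}\{v_n,v_{n+1},\dots,v_{n+m}\}$, which has dimension $m+1$. Since $\dim P + \dim W = n + (m+1) > n+m$, there exists a unit vector $w\in P\cap W$. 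On one hand, $|Aw|\gtrsim \delta$ by the hypothesis transferred through Lemma \ref{lem:seminorm}. On the other hand, $w = \sum_{i=n}^{n+m} b_i v_i$ with $\sum b_i^2 = 1$, and so $|Aw|^2 = \sum_{i=n}^{n+m} \sigma_i^2 b_i^2 \leq \sigma_n^2$. Combining these gives $\sigma_n \gtrsim \delta$.

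Finally I would verify the conclusion using the basis $\{v_i\}$. For any scalars $a_1,\dots,a_{n+m}$ and $w = \sum a_i v_i$, orthonormality gives
\[ |Aw|^2 = \sum_{i=1}^{n+m} \sigma_i^2 a_i^2 \geq \sigma_n^2 \sum_{i=1}^{n} a_i^2, \]
since $\sigma_i \geq \sigma_n$ for $i\leq n$. By Cauchy--Schwarz, $\sqrt{\sum_{i=1}^n a_i^2} \geq \frac{1}{\sqrt{n}}\sum_{i=1}^n |a_i|$, so
\[ |Aw| \geq \tfrac{\sigma_n}{\sqrt{n}} \sum_{i=1}^n |a_i| \gtrsim \delta \sum_{i=1}^n |a_i|. \]
Translating back through $\|w\|\approx |Aw|$ yields the desired bound.

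The only genuinely non-routine step is the dimension-counting argument showing $\sigma_n \gtrsim \delta$; everything else is bookkeeping with SVD and Cauchy--Schwarz. I do not expect any serious obstacle, but I would double-check that all implied constants are tracked so that they depend only on $n$ and $m$ (both the constant from Lemma \ref{lem:seminorm} and the $1/\sqrt{n}$ from Cauchy--Schwarz are of this form).
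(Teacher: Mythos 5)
Your proposal is correct and follows essentially the same path as the paper: invoke Lemma~\ref{lem:seminorm} to reduce to a linear map $A$, take the SVD, show $\sigma_n\gtrsim\delta$, and use the right singular vectors as the orthonormal basis. The only cosmetic difference is that you prove $\sigma_n\gtrsim\delta$ directly by a dimension-counting argument with $W=\operatorname{span}\{v_n,\dots,v_{n+m}\}$, whereas the paper simply cites the max--min (Courant--Fischer) characterization of $\sigma_n$; these are the same fact, so the two proofs agree in substance.
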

\begin{proof}

Let $A\colon \RR^{n+m}\rightarrow \RR^{n+m}$ be as in Lemma \ref{lem:seminorm}. 

Let $A=USV^*$ denote the singular value decomposition of $A$, with singular values $\sigma_1\geq \dots \geq \sigma_{n+m}$. Thus, $U$ and $V^*$ are orthogonal $(n+m)\times (n+m)$ matrices and $S$ is a diagonal $(n+m)\times (n+m)$ matrix with the singular values along the diagonal. The max-min characterization of singular values says that the $n$th largest singular value, $\sigma_n$, can be computed by
$$ \sigma_n = \max_{\dim(T)=n} \min_{0\neq x\in T} \frac{|Ax|}{|x|} \gtrsim \max_{\dim(T)=n} \min_{0\neq x\in T} \frac{\|x\|}{|x|},$$
where the maximum is over all subspaces $T$ of dimension $n$ in $\RR^{n+m}$. 

Taking $T=P$, we obtain
$$ \sigma_n \gtrsim \delta>0,$$
with implied constant depending only on $n,m$.

For $i=1, \dots, n+m$, let $v_i =(V^*)^{-1}(e_i)$, where $\{e_i\}$ is the standard basis of $\RR^{n+m}$. Then for any choice of $a_1, \dots, a_{n+m}$ in $\RR$, we have
\begin{align*}
\left\|\sum_{i=1}^{n+m} a_i v_i\right\| &\gtrsim \left|A\left(\sum_{i=1}^{n+m} a_i v_i\right)\right|\\
&= \left|\sum_{i=1}^{n+m} a_i Se_i\right|\\
&\gtrsim \delta \sum_{i=1}^n |a_i|,
\end{align*}
with implied constants depending only on $n$ and $m$.
\end{proof}

\section{Lower bounds for mapping content}
This section is devoted to the proof of Theorem \ref{thm:lowerbound}.

\subsection{A result of Kinneberg}
We describe here the set-up of Kinneberg \cite[Section 4]{kinneberg2016discrete} which we require.

Let $g\colon [0,1]^d \rightarrow X$ be a continuous map into a metric space. Let $\{U_i\}_{i\in I}$ be an open cover of $g([0,1]^d)$. A ``chain'' of open sets from this collection is a finite sequence $U_{i_1}, U_{i_2}, \dots, U_{i_T}$ such that consecutive sets intersect. We say that the chain \emph{connects} two subsets  $E,F\subseteq g([0,1]^d)$ if $E$ intersects $U_{i_1}$ and $F$ intersects $U_{i_T}$.

For each $1\leq k \leq d$, let 
$$ w_k\colon I \rightarrow [0,\infty)$$
be a ``weight function'' defined on the sets of the cover. 

For subsets $E,F\subseteq g([0,1]^d)$, their ``combinatorial distance'' with respect to the above choices is defined as
$$ \dist_{w_k}(E, F) = \inf \sum_{t=1}^T w_k(i_t), $$
where the infimum is taken over all chains $U_{i_1}, \dots, U_{i_T}$ connecting $E$ to $F$.

Proposition 4.1 of \cite{kinneberg2016discrete} is then the following:
\begin{proposition}[Kinneberg \cite{kinneberg2016discrete}]\label{prop:kinneberg}
For any continuous $g\colon [0,1]^d\rightarrow X$ and any open cover $\{U_i\}_I$ and weight functions $w_k$ as defined above, we have
\begin{equation}\label{eq:kinneberg}
 \sum_{i\in I} \left( \prod_{k=1}^d w_k(i) \right) \geq \prod_{k=1}^d \dist_{w_k}(g(F_k), g(F'_k)).
\end{equation}
\end{proposition}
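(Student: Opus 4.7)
The plan is to reduce the statement to a combinatorial inequality about open covers of the cube $[0,1]^d$ itself and then prove it by induction on $d$, slicing in the last direction.

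First, the reduction. By a standard compactness argument and since only finitely many $U_i$ meet the compact image $g([0,1]^d)$, I would reduce to the case $|I|<\infty$. Then I would pull back the cover: set $\tilde U_i := g^{-1}(U_i)$, forming an open cover of $[0,1]^d$. Since $\tilde U_i \cap \tilde U_j \neq \emptyset$ implies $U_i \cap U_j \neq \emptyset$, and $\tilde U_i \cap F_k \neq \emptyset$ iff $U_i \cap g(F_k)\neq \emptyset$, any chain in $\{\tilde U_i\}$ connecting $F_k$ to $F_k'$ projects to a chain in $\{U_i\}$ of equal weight connecting $g(F_k)$ to $g(F_k')$. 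Thus
\[
 \dist_{w_k}^{\tilde U}(F_k, F_k') \;\geq\; \dist_{w_k}(g(F_k), g(F_k')),
\]
while the sum $\sum_i \prod_k w_k(i)$ is unchanged. So it suffices to prove the inequality when the domain map is the identity on $[0,1]^d$ and the ``faces'' $F_k, F_k'$ are literal faces of the cube.

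Second, the base case $d=1$ is immediate: since $[0,1]$ is connected and the cover is finite and open, its nerve contains a path (chain) from a set meeting $F_1=\{0\}$ to a set meeting $F_1'=\{1\}$, so $\sum_{i} w_1(i) \geq \dist_{w_1}(F_1, F_1')$.

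Third, the inductive step $d \to d+1$. I introduce the distance-to-$F_{d+1}$ function $\delta(i) := \dist_{w_{d+1}}(F_{d+1}, U_i)$, which takes values in $[0, D_{d+1}]$ (with $D_{d+1} := \dist_{w_{d+1}}(F_{d+1}, F_{d+1}')$) on sets reachable from $F_{d+1}$. I would decompose $I$ into thin ``layers'' $I_s$ parameterized by a threshold $s\in[0,D_{d+1}]$, consisting of indices whose $\delta$-value lies in a small band around $s$. The key geometric claim is that each layer sub-cover still contains chains connecting $F_k$ to $F_k'$ for every $k\leq d$, so the inductive hypothesis applied within each layer gives
\[
\sum_{i\in I_s} \prod_{k=1}^{d} w_k(i) \;\geq\; \prod_{k=1}^d D_k.
\]
Accumulating across layers, and noting that the $w_{d+1}$-weights assemble along chains to cover $[0,D_{d+1}]$ (so that the ``thickness'' of the layering in $s$ is absorbed by $w_{d+1}$), produces
\[
\sum_{i \in I} \prod_{k=1}^{d+1} w_k(i) \;\geq\; D_{d+1}\prod_{k=1}^{d} D_k,
\]
the desired inequality.

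The main obstacle is making the layer decomposition rigorous so that each layer both (i) remains ``horizontally connected'' in the first $d$ directions (to apply induction) and (ii) accounts for the $w_{d+1}$-weight of each $U_i$ without overcount. Requirement (i) is a topological assertion specific to cubes: the level sets of $\delta$ must not disconnect opposite horizontal faces. I expect this is handled by choosing layer boundaries at ``regular'' values of $\delta$ and exploiting that adjacent indices differ by at most $w_{d+1}(\cdot)$, so that a chain crossing a layer boundary can be chopped off and re-routed within the layer at controlled $w_{d+1}$-cost, which is then exactly what accumulates to $D_{d+1}$ after summing.
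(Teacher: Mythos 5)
The paper does not actually prove this proposition; it is quoted verbatim from Kinneberg \cite{kinneberg2016discrete}, whose proof is a degree-theoretic (Poincar\'e--Miranda / Lebesgue-covering-type) argument rather than a slicing induction. So there is no in-paper proof to compare against, and your proposal must stand on its own.

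Your reduction to $g=\mathrm{id}$ is fine: chains in $\{g^{-1}(U_i)\}$ map to chains in $\{U_i\}$, so $\dist_{w_k}^{\tilde U}(F_k,F_k')\geq\dist_{w_k}(g(F_k),g(F_k'))$, and the cover sum is unchanged. The base case $d=1$ via connectedness is also fine. The problem is the inductive step, and it is a genuine gap, not a rough edge. You identify it yourself but do not close it: the ``key geometric claim'' that each layer $I_s$ still contains chains from $F_k$ to $F_k'$ for every $k\leq d$ is not a combinatorial triviality --- it is precisely the topological content of the theorem. What you need is a quantitative statement of the form ``any set in the nerve that separates $F_{d+1}$ from $F_{d+1}'$ must itself span the cube in all transverse directions,'' and that separator statement is equivalent in difficulty to the theorem one is trying to prove; the standard routes to it (Sperner/KKM, Poincar\'e--Miranda, Brouwer degree) are exactly what a non-inductive proof of the proposition uses directly. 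In addition, you never specify how the layers $I_s$ partition $I$ (or, if they overlap, how overcounting is controlled), nor how indices $i$ with $\delta(i)$ falling between layer boundaries are accounted for; for wildly nonuniform $w_{d+1}$ these bookkeeping issues interact with (i) in a way the sketch does not resolve. As written, the argument is circular at its core: the inductive hypothesis is only usable inside a layer after one has already established a cube-spanning separation property that is at the same level of depth as the proposition itself.

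A more direct route, and the one Kinneberg-style arguments take, is to set $\phi_k(i)=\min\bigl(\dist_{w_k}(g(F_k),U_i),\,D_k\bigr)$ with $D_k=\dist_{w_k}(g(F_k),g(F_k'))$, use a partition of unity $\{\psi_i\}$ subordinate to $\{g^{-1}(U_i)\}$ to define $\Phi(x)=\sum_i\psi_i(x)\,(\phi_1(i),\dots,\phi_d(i))$, verify the boundary conditions on each pair of opposite faces, conclude surjectivity of $\Phi$ onto $\prod_k[0,D_k]$ via Poincar\'e--Miranda, and then observe that each simplex of the nerve maps into a box of side lengths at most $w_k(i)$ (using the $1$-Lipschitz property of $\phi_k$ along edges of the nerve), giving the volume bound $\sum_i\prod_k w_k(i)\geq\prod_k D_k$. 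If you want to salvage an inductive slicing argument, you would need to first prove a quantitative separating-slice lemma by exactly this degree method, at which point the induction is no longer carrying the load.
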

Recall that $\{F_k, F'_k\}$ denote opposite faces of the boundary of the unit cube, as defined in \eqref{eq:faces}.

\subsection{The proof of Theorem \ref{thm:lowerbound}}
\begin{proof}[Proof of Theorem \ref{thm:lowerbound}]
We will prove the second statement \eqref{eq:lowerbound2} of Theorem \ref{thm:lowerbound}, and then indicate the minor modifications needed to obtain the first statement \eqref{eq:lowerbound}.

Let $f$ be as in the statement of the theorem. 
Let $\{S_i\}_{i\in I}$ be an arbitrary cover of $Q_0$ by sets $S_i$. For each $i\in I$, let $\{U^i_j\}_{j\in J_i}$ be an arbitrary open cover of $f(S_i)$. Fix $\eta>0$ arbitrary.

For each $i\in I$ and $j\in J_i$, let
$$ V^i_j =U^i_j \times N_{\eta}(\text{Proj}_y(S_i)) \subseteq Y \times [0,1]^m.$$
Here $\text{Proj}_y$ denotes the map $(x,y)\mapsto y$ from $[0,1]^{n+m}$ to $[0,1]^m$, and $N_\eta(\cdot)$ denotes the open $\eta$-neighborhood of a set in $[0,1]^m$. The purpose of taking the neighborhood $N_\eta$ in $[0,1]^m$ is just to ensure that each set $V^i_j$ is open.

We now define weights $w_k(i,j)\in [0,\infty)$ for $1\leq k \leq n+m$, $i\in I$, and $j\in J_i$. 
\[ w_k(i,j) = \begin{cases} 
      \diam(U^i_j) & 1\leq k \leq n \\
     \diam(S_i)+2\eta & n+1\leq k\leq n+m 
   \end{cases}
\]
We will apply Theorem \ref{eq:kinneberg} to the function $h\colon Q_0 \rightarrow Y \times [0,1]^m$ defined by $ h(x,y) = (f(x,y),y).$
We equip $Y\times [0,1]^m$ with the metric $d_{Y\times [0,1]^m}$ defined as $d_{Y\times [0,1]^m}((p,t),(q,s)) = \max\{d_Y(p,q), |t-s|\}$. In this application, the open cover of Theorem \ref{eq:kinneberg} will be $\{V^i_j\}$, and the weights $w_k(i,j)$ will be as defined above. The left-hand side of \eqref{eq:kinneberg} is then
\begin{equation}\label{eq:inf} 
\sum_{i\in I} \sum_{j\in J_I} \prod_{k=1}^{n+m} w_k(i,j) = \sum_{i\in I} \left(\sum_{j\in J_i} \diam(U^i_j)^n\right) (\diam(S_i)+2\eta)^m.
\end{equation}
After taking the infimum over all choices of $\{S_i\}$, $\{U^i_j\}$, and $\eta$, this yields exactly $\hat{\HH}^{n,m}_\infty(f,Q_0)$.

We now lower bound the right-hand side of \eqref{eq:kinneberg}. First, for $1 \leq k \leq n$, we have
\begin{align*}
\dist_{w_k}(h(F_k), h(F'_k)) &= \inf\{ \sum_t w_k(V^{i_t}_{j_t}) : \{V^{i_t}_{j_t}\} \text{ connects } h(F_k), h(F'_k)\}\\
&=\inf\{ \sum_t \diam(U^i_j) : \{V^{i_t}_{j_t}\} \text{ connects } h(F_k), h(F'_k)\}\\
&\geq \inf\{ \sum_t \diam(U^i_j) : \{U^{i_t}_{j_t}\} \text{ connects } f(F_k), f(F'_k)\}\\
&\geq \dist_Y(f(F_k), f(F'_k)).
\end{align*}

Next, consider $n+1 \leq k \leq n+m$. Let 
$$ A = \{ (y_1, \dots, y_{k-n-1}, 0, y_{k-n+1}, \dots, y_m\} \subseteq [0,1]^m$$
and
$$ B = \{ (y_1, \dots, y_{k-n-1}, 1, y_{k-n+1}, \dots, y_m\} \subseteq [0,1]^m.$$
We then have
\begin{align*}
\dist_{w_k}(h(F_k), h(F'_k)) &= \inf\{ \sum_t w_k(V^{i_t}_{j_t}) : \{V^{i_t}_{j_t}\} \text{ connects } h(F_k), h(F'_k)\}\\
&=\inf\{ \sum_t (\diam(S_i)+2\eta) : \{V^{i_t}_{j_t}\} \text{ connects } h(F_k), h(F'_k)\}\\
&\geq \inf\{ \sum_t (\diam(S_i)+2\eta) : \{ N_{\eta}(\text{Proj}_{k}(S_i))\} \text{ connects } A, B\}\\
&\geq 1
\end{align*}
It follows that the right-hand side of \eqref{eq:kinneberg} is bounded below by
$$ \prod_{k=1}^n \dist_Y(f(F_k), f(F'_k)),$$
independently of the choice of sets $\{S_i\}$, open sets $\{U^i_j\}$, and $\eta>0$. Taking the infimum over all these choices, we obtain \eqref{eq:lowerbound2}.

The proof of \eqref{eq:lowerbound} is the same, with the following minor modifications: The sets $S_i$ are replaced by almost-disjoint dyadic cubes $Q_i$. The role of the open sets $U^j_i$ and parameter $\eta>0$ remain the same. The weight $w_k(i,j)$ is the same as before if $1\leq k \leq n$ and changes to $\side(Q_i)+2\eta$ if $n+1\leq k \leq n+m$. 

With these modifications, taking the infimum over all collections $\{Q_i\}$, $\{U^i_j\}$, and parameters $\eta>0$ in the analog of \eqref{eq:inf} yields $\HH^{n,m}_\infty(f,Q_0)$, and the calculations with the modified weights $w_k(i,j)$ yield the same lower bound. 
\end{proof}

Theorem \ref{thm:lowerbound} will enter the proof of Theorem \ref{thm:contentzero} through the following corollary.

\begin{corollary}\label{cor:positivecontent}
Let $f\colon Q_0\rightarrow Y$ be $1$-Lipschitz and $c,\eta>0$. Let $Q\subseteq Q_0$ be a cube, 
and let $\|\cdot\|$ be a seminorm such that
$$ \left| d(f(x),f(y)) - \|x-y\| \right| < \eta\side(3Q) \text{ for all } x,y\in 3Q.$$

Suppose that there is an $n$-plane $P\subseteq \RR^{n+m}$ such that $\|v\| \geq c |v|$ for all $v\in P$.

If $\eta$ is sufficiently small, depending only on $n,m,c$, then 
$$ \HH^{n,m}_\infty(f,Q) \gtrsim |Q|,$$
with implied constant depending only on $n$, $m$, and $c$.
\end{corollary}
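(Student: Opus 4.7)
The plan is to find, inside $Q$, a rotated sub-cube aligned with a basis adapted to the seminorm $\|\cdot\|$, on which Theorem~\ref{thm:lowerbound} applies and yields the desired content lower bound. I would first apply Lemma~\ref{lem:svd} to $\|\cdot\|$ and $P$ to obtain an orthonormal basis $v_1,\dots,v_{n+m}$ of $\RR^{n+m}$ with
$$\left\|\sum_{i=1}^{n+m} a_i v_i\right\|\gtrsim c\sum_{i=1}^n |a_i|,$$
and let $Q'\subseteq Q$ be the largest cube centered at the center of $Q$ with edges parallel to these $v_i$. Since $Q'$ has side length $s':=\side(Q)/\sqrt{n+m}$, it lies inside the inscribed ball of $Q$ and hence inside $Q$.

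Next, I would lower-bound, for each $k=1,\dots,n$, the $Y$-distance between the images under $f$ of the two opposite faces of $Q'$ perpendicular to $v_k$. For points $x,y$ on these opposite faces, $y-x = s'v_k + \sum_{j\ne k} a_j v_j$ with $|a_j|\le s'$, so Lemma~\ref{lem:svd} yields $\|y-x\| \gtrsim c\,s'$. Combining with the seminorm approximation $|d(f(x),f(y)) - \|y-x\||<\eta\,\side(3Q)=3\eta\,\side(Q)$ and choosing $\eta$ sufficiently small in terms of $n,m,c$, this gives $d(f(x),f(y)) \gtrsim c\,\side(Q)$.

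To conclude, I would apply statement~\eqref{eq:lowerbound2} of Theorem~\ref{thm:lowerbound} to $\tilde f := f\circ\phi\colon Q_0 \to Y$, where $\phi\colon Q_0\to Q'$ is the scaled isometry sending $e_i \mapsto s'v_i$; this produces $\hat{\HH}^{n,m}_\infty(\tilde f, Q_0) \gtrsim (c\,\side(Q))^n$. A direct change of variables, valid precisely because $\hat{\HH}^{n,m}_\infty$ is defined using arbitrary covers (not dyadic cubes), gives $\hat{\HH}^{n,m}_\infty(f, Q') = (s')^m\, \hat{\HH}^{n,m}_\infty(\tilde f, Q_0) \gtrsim c^n |Q|$. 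Combining with monotonicity ($Q'\subseteq Q$) and the comparison $\hat{\HH}^{n,m}_\infty\lesssim\HH^{n,m}_\infty$ from \eqref{eq:arbcontent} yields $\HH^{n,m}_\infty(f,Q)\gtrsim |Q|$. The main technical wrinkle is that the dyadic cubes underlying the definition of $\HH^{n,m}_\infty$ do not transform well under the rotation implicit in the change of basis to the $v_i$, so the argument naturally proceeds through the more flexible $\hat{\HH}^{n,m}_\infty$ and only invokes \eqref{eq:arbcontent} at the very end to transfer the bound back to $\HH^{n,m}_\infty$.
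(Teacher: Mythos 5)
Your proposal is correct and follows essentially the same route as the paper's own proof: apply Lemma~\ref{lem:svd} to build an adapted orthonormal basis, insert a rotated cube $Q'\subseteq Q$ aligned with that basis, lower-bound the $Y$-distance between images of opposite faces using the seminorm approximation, invoke Theorem~\ref{thm:lowerbound} (version~\eqref{eq:lowerbound2}) on $Q'$, and transfer back to $\HH^{n,m}_\infty$ via monotonicity and \eqref{eq:arbcontent}. The only difference is cosmetic: the paper dispatches the rescaling and rotation with the phrase ``(rescaled and rotated to apply to $Q'$),'' whereas you spell out the change of variables through $\tilde f = f\circ\phi$ and the scaling identity $\hat{\HH}^{n,m}_\infty(f,Q') = (s')^m\,\hat{\HH}^{n,m}_\infty(\tilde f, Q_0)$ explicitly, which is a welcome clarification of why one must work with $\hat{\HH}^{n,m}_\infty$ rather than $\HH^{n,m}_\infty$ before the final step.
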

\begin{proof}
Let $v_1, \dots, v_{n+m}$ be an orthonormal basis of $\mathbb{R}^{n+m}$ ``adapted'' to $\|\cdot\|$, as provided by Lemma \ref{lem:svd}. Let $Q'\subseteq Q$ be a rotated cube with $\side(Q')\approx_{n,m} \side(Q)$ oriented along the axes $v_1, \dots, v_{n+m}$. Let $\{(F_k, F'_k)\}_{k=1}^{n+m}$ denote the pairs of opposite faces of $Q'$.

Let $1\leq k \leq n$ and let $x\in F_k, x'\in F'_k$. Let $v=x'-x$. Writing $v=\sum_{i=1}^{n+m} a_i v_i$ in the new basis, we have $a_k\gtrsim \side(Q)$. By choice of seminorm, 
$$ d(f(x), f(x')) \geq \|v\| - 3\eta\side(Q).$$
By Lemma \ref{lem:svd}, $\|v\| \gtrsim c|a_k|\gtrsim c\side(Q)$, with implied constant depending only on $n,m$. Thus, if $\eta$ is chosen sufficiently small depending on $n$, $m$, and $c$, then $d(f(x),f(x'))$ is bounded below away from zero by $a\side(Q)$, for some constant $a$ depending only on $n$, $m$, and $c$.

This proves that 
$$ \dist(f(F_k), f(F'_k)) \geq a\side(Q) \text{ for } 1\leq k \leq n,$$
and therefore by statement \eqref{eq:lowerbound2} of Theorem \ref{thm:lowerbound} (rescaled and rotated to apply to $Q'$) that
$$ \hat{\HH}^{n,m}_\infty(f,Q')\gtrsim |Q'|,$$
with implied constant depending on $n$, $m$, and $c$.

Therefore, we have
$$ \HH^{n,m}_\infty(f,Q_0) \gtrsim_{n,m} \hat{\HH}^{n,m}_\infty(f,Q_0) \geq \hat{\HH}^{n,m}_\infty(f,Q')\gtrsim_{n,m,c} |Q'| \gtrsim_{n,m} |Q|.$$
\end{proof}

\section{Zero mapping content passes to the limit}
This section is devoted to the proof of Theorem \ref{thm:contentzero}. We begin with some lemmas. The first is Lemma 7.3 of \cite{David-Schul:harder-sarder} (stated in a slightly different way).

\begin{lemma}\label{lem:goodcube}
Given $\delta>0$, there is a constant $c>0$, depending only on $n$, $m$, and $\delta$, with the following property.

Let $Q_0=[0,1]^{n+m}$ and let $f\colon Q_0 \rightarrow Y$ be $1$-Lipschitz. Suppose that $\HH^{n,m}_\infty(f,Q_0)=\delta>0$ and $\eta>0$. 
Then there is a cube $Q\subseteq Q_0$ such that
$$ \md_f(3Q)<\eta$$
and an $n$-plane $P$ in $\RR^{n+m}$ such that
\begin{equation}\label{eq:plane}
\|v\|_Q \geq c|v|\text{ for all } v\in P.
\end{equation}
Moreover, the size of the cube $Q$ can be bounded below by a constant depending only on $n$, $m$, $\delta$, and $\eta$.
\end{lemma}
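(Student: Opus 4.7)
My plan is to argue by contrapositive. Assume no cube $Q$ of side length at least some $s_0 = s_0(n,m,\delta,\eta)$, to be chosen, simultaneously satisfies $\md_f(3Q) < \eta$ and the claimed $n$-plane bound, and derive a contradiction by bounding $\HH^{n,m}_\infty(f, Q_0)$ strictly below $\delta$.

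First I would reformulate the negated conclusion through singular values. By the max-min characterization used in the proof of Lemma \ref{lem:svd}, the statement that no $n$-plane $P$ satisfies $\|v\|_Q \geq c|v|$ for all $v \in P$ is equivalent, up to constants depending on $n$ and $m$, to the bound $\sigma_n(A_Q) \lesssim c$, where $A_Q$ is the linear map provided by Lemma \ref{lem:seminorm} for the seminorm $\|\cdot\|_Q$. Since $f$ is $1$-Lipschitz, one also has $\|v\|_Q \lesssim |v|$ up to an additive error of order $\md_f(3Q)\side(3Q)$, so all singular values $\sigma_i(A_Q)$ are bounded by a constant depending only on $n,m$.

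Next I would estimate content on a ``good'' cube $Q$ with $\md_f(3Q) < \eta$ and $\sigma_n(A_Q) \lesssim c$. The image $f(Q)$ lies within an $\eta\side(Q)$-neighborhood of a translate of $A_Q(Q - q_Q)$, where $q_Q$ is the center of $Q$, and therefore
\[ \HH^n_\infty(f(Q)) \lesssim \sigma_1(A_Q)\cdots\sigma_n(A_Q)\,\side(Q)^n + \eta\,\side(Q)^n \lesssim (c+\eta)\,\side(Q)^n, \]
whence $\HH^n_\infty(f(Q))\,\side(Q)^m \lesssim (c+\eta)\,|Q|$.

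I would then assemble an efficient cover via quantitative differentiation. Applying Theorem \ref{thm:quantdiff} with $\epsilon = \eta$ and $C_0 = 3$, the total Lebesgue measure of all dyadic cubes with $\md_f(3Q) > \eta$, summed across all scales, is bounded by $C_{\eta, n+m}$. Hence for any $K$ there exists a scale $k_* \leq K$ at which the total measure of ``bad'' cubes of side $2^{-k_*}$ is at most $C_{\eta, n+m}/K$. Covering $Q_0$ by the dyadic cubes at this scale: under the contradiction hypothesis, every good cube at this scale (once $2^{-k_*} \geq s_0$) obeys the singular-value bound and so contributes $\lesssim (c+\eta)\,|Q|$ to $\HH^{n,m}_\infty(f,Q_0)$; bad cubes contribute at most $|Q|$ each, for a total of $\leq C_{\eta, n+m}/K$. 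Therefore
\[ \HH^{n,m}_\infty(f,Q_0) \lesssim (c+\eta) + \frac{C_{\eta,n+m}}{K}. \]
Choosing $c$ and $\eta$ small enough that $(c+\eta) < \delta/3$, then $K$ large enough that $C_{\eta,n+m}/K < \delta/3$, and finally $s_0 = 2^{-K}$, contradicts $\HH^{n,m}_\infty(f,Q_0) = \delta$.

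The main obstacle is coordinating the parameters $c$, $\eta$, $K$, and $s_0$ so all error terms simultaneously sit below $\delta$, while still yielding a usable quantitative lower bound for $\side(Q)$. A subtler point is that the singular-value reformulation crucially uses the Lipschitz constraint to cap \emph{all} singular values---not just the largest---so that the product $\sigma_1\cdots\sigma_n$ is genuinely controlled by $c$ up to dimensional constants; without this, the bound on $\HH^n_\infty(f(Q))$ would be too weak.
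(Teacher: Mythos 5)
The paper does not prove this lemma itself; it cites it as Lemma~7.3 of the earlier David--Schul paper, so there is no in-text proof to compare against. Evaluated on its own terms, your proposal is sound in structure: quantitative differentiation (Theorem~\ref{thm:quantdiff}) plus a pigeonhole in scale to isolate a level at which almost all cubes are ``good,'' together with the observation that a good cube whose seminorm fails the $n$-plane condition has $n$th singular value $\lesssim c$, hence small local mapping content. That is a correct and natural route to the lemma.

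There are, however, two points that need to be tightened. First, $\eta$ is a \emph{given} parameter of the lemma, while your argument treats it as a free parameter to be chosen small (so that $(c+\eta)<\delta/3$). This is repairable but should be stated: once the lemma is proved for all $\eta\le\eta_0(n,m,\delta)$, the case $\eta>\eta_0$ follows by applying the result with $\eta_0$ in place of $\eta$ (the conclusion only becomes easier, and $s_0(\eta):=s_0(\eta_0)$ for $\eta>\eta_0$), and since the constant $c$ you choose depends only on $n,m,\delta$ it works for all $\eta$. Second, the phrase that $f(Q)$ ``lies within an $\eta\side(Q)$-neighborhood of a translate of $A_Q(Q-q_Q)$'' is not literally meaningful, since $f(Q)\subseteq Y$ and $A_Q(Q)\subseteq\RR^{n+m}$ live in different spaces. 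What you actually want is a Hausdorff-content bound obtained by covering: partition $Q$ into preimages under $A_Q$ of balls of radius $r\side(Q)$ in $A_Q(Q)$; each such piece has $\|\cdot\|_Q$-diameter $\lesssim r\side(Q)$, hence $f$-image of diameter $\lesssim (r+\eta)\side(Q)$, and the number of pieces is $\prod_{\sigma_i>r}(\sigma_i/r)\lesssim r^{-(n-1)}$ once $\sigma_n\lesssim c\le r$ and $\sigma_1\lesssim 1$. Choosing $r\approx c+\eta$ gives $\HH^n_\infty(f(Q))\lesssim (c+\eta)\side(Q)^n$, which is what your display asserts. Finally, a minor technicality worth recording: to invoke Theorem~\ref{thm:quantdiff} for cubes $3Q$ that may leave $Q_0$, one first extends $f$ to a $1$-Lipschitz map on all of $\RR^{n+m}$ (Kuratowski embed $Y\hookrightarrow\ell^\infty$, then extend coordinatewise by McShane).
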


\begin{remark}
In fact, the proof in \cite{David-Schul:harder-sarder} shows that $P$ can be chosen to be a coordinate plane, though we do not need this here. 
\end{remark}





\begin{proof}[Proof of Theorem \ref{thm:contentzero}]
Let $\{f_i\colon Q_0 \rightarrow Y_i\}$ and $f\colon Q_0 \rightarrow Y$ be as in the statement of Theorem \ref{thm:contentzero}.

Suppose first that $\HH^{n,m}_\infty(f,Q_0)=\delta>0$; we will show that $\HH^{n,m}_\infty(f_i,Q_0)\rightarrow 0$.

Let $c>0$, depending on $\delta,n,m$, be as in Lemma \ref{lem:goodcube}. Let $\eta$ be chosen sufficiently small, depending on $n,m,c$, as required by Corollary \ref{cor:positivecontent}.

By Lemma \ref{lem:goodcube}, we may find a cube $Q\subseteq Q_0$ such that 
$$ \md_f(3Q) < \eta/10$$
and such that there is an $n$-plane $P$ as in \eqref{eq:plane}, with respect to a seminorm $\|\cdot\|_Q$ on $\RR^{n+m}$ such that
$$ |d(f(x),f(y)) - \|x-y\|_Q| < \frac{\eta}{10}\side(3Q) \text{ for all } x,y\in 3Q.$$

Choose $i_0$ sufficiently large so that $\dist(f_i,f)<\frac{\eta}{10}\side(Q)$ for all $i\geq i_0$. 
The same seminorm $\|\cdot\|_Q$ satisfies
$$ |d(f_i(x) , f_i(y)) - \|x-y\|_Q| < \frac{3\eta}{10}\side(3Q) \text{ for all } x,y\in 3Q \text{ and } i\geq i_0$$
It follows from Corollary \ref{cor:positivecontent} that
$$ \HH^{n,m}_\infty(f_i, Q) \gtrsim_{n,m,\delta} |Q|$$
for all $i$ sufficiently large, which contradicts the assumption that $\HH^{n,m}_\infty(f_i,Q_0)\rightarrow 0$.

For the converse statement, we continue to assume that $\dist(f_i,f)\rightarrow 0$, but now we suppose that $\HH^{n,m}_\infty(f,Q_0)=0$ and that $\HH^{n,m}_\infty(f_i,Q_0)$ does not tend to zero. In that case, passing to a subsequence (which we rename $\{f_i\}$ to keep the notation the same) yields $\delta>0$ such that $\HH^{n,m}_\infty(f_i,Q_0)\geq\delta>0$. Exactly as before, we set $c=c(\delta,n,m)>0$ as in Lemma \ref{lem:goodcube} and $\eta=\eta(n,m,c)$ sufficiently small as required by Corollary \ref{cor:positivecontent}.

It follows that for each $i$, there is a dyadic cube $Q_i\subseteq Q_0$ and an $n$-plane $P_i$ as in \eqref{eq:plane}, with respect to a seminorm $\|\cdot\|_i$ on $\RR^{n+m}$ such that
$$ |d(f_i(x),f_i(y)) - \|x-y\|_i| < \frac{\eta}{10}\side(3Q_i) \text{ for all } x,y\in 3Q_i.$$
Moreover, Lemma \ref{lem:goodcube} guarantees that $\side(Q)\geq s_0$, for some $s_0$ depending only on $n,m,\delta,\eta$, and hence $n,m,\delta$.

Choose any $i$ sufficiently large so that $\dist(f,f_i)<\eta s_0 /10 \leq \eta\side(Q_i)/10$. The same seminorm $\|\cdot\|_i$ satisfies
$$ |d(f(x) , f(y)) - \|x-y\|_i| < \frac{3\eta}{10}\side(3Q) \text{ for all } x,y\in 3Q_i \text{ and } i\geq i_0$$
It therefore again follows from Corollary \ref{cor:positivecontent} that
$$ \HH^{n,m}_\infty(f,Q_0) \geq \HH^{n,m}_\infty(f, Q_i) \gtrsim_{n,m,\delta} |Q_i| > 0$$
contradicting the assumption that $\HH^{n,m}_\infty(f,Q_0)=0$.

\end{proof}

\section{Proofs of Theorem \ref{thm:alldim} and Theorem \ref{thm:main}}
In this section, we prove the two approximation theorems, Theorems \ref{thm:alldim} and \ref{thm:main}. The latter will be an immediate corollary of the former and Theorem \ref{thm:EGH}.

We first need a Gromov-Hausdorff type compactness lemma.
\begin{lemma}\label{lem:GHcompact}
Fix $d\in \mathbb{N}$. Let $f_i\colon [0,1]^d \rightarrow Y_i$ be a sequence of $1$-Lipschitz mappings onto metric spaces $Y_i$. Then there is a $1$-Lipschitz mapping $f\colon [0,1]^d \rightarrow Y$ onto a metric space and a subsequence $\{f_{i_k}\}$ such that
$$ \dist(f_{i_k},f) \rightarrow 0.$$
\end{lemma}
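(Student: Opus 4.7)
The plan is to run a Gromov--Hausdorff style argument parametrized by the fixed domain $[0,1]^d$: pull each target metric back to a pseudo-metric on $[0,1]^d$, use Arzel\`a--Ascoli to pass to a uniform limit, build the candidate limit space as a metric quotient, and construct compatible Kuratowski-type isometric embeddings indexed by a common countable dense subset of $[0,1]^d$ (not of the $Y_i$'s).

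First I would set up the pull-back pseudo-metrics $\rho_i \colon [0,1]^d \times [0,1]^d \to [0,\sqrt{d}\,]$ by $\rho_i(x,y) := d_{Y_i}(f_i(x), f_i(y))$. Since each $f_i$ is $1$-Lipschitz, the reverse triangle inequality gives $|\rho_i(x,y) - \rho_i(x',y')| \leq |x-x'| + |y-y'|$, so $\{\rho_i\}$ is an equicontinuous, uniformly bounded family on the compact space $[0,1]^d \times [0,1]^d$. By Arzel\`a--Ascoli, pass to a subsequence (which I rename $\{\rho_i\}$) converging uniformly to a function $\rho$. The uniform limit is symmetric, vanishes on the diagonal, and satisfies the triangle inequality, so it is a continuous pseudo-metric on $[0,1]^d$ with $\rho(x,y) \leq |x-y|$.

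Next I would construct the candidate limit. Let $x \sim y$ iff $\rho(x,y)=0$, set $Y := [0,1]^d/\!\sim$ with the induced metric $d_Y([x],[y]) := \rho(x,y)$, and let $f \colon [0,1]^d \to Y$ be the quotient map. Then $f$ is $1$-Lipschitz and surjective, and $Y$ is separable.

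Finally, to verify $\dist(f_i, f) \to 0$, fix a countable dense $D = \{q_j\}_{j\in\mathbb{N}} \subseteq [0,1]^d$. Since each $f_i$ is continuous and surjective from the compact set $[0,1]^d$, the set $\{f_i(q_j)\}_j$ is dense in $Y_i$, and likewise $\{f(q_j)\}_j$ is dense in $Y$. Define
\[ \iota_i \colon Y_i \to \ell^\infty, \qquad \iota_i(y) := \bigl( d_{Y_i}(y, f_i(q_j)) \bigr)_{j\in\mathbb{N}}, \]
\[ \iota \colon Y \to \ell^\infty, \qquad \iota([x]) := \bigl( \rho(x, q_j) \bigr)_{j\in\mathbb{N}}. \]
The uniform diameter bound puts these in $\ell^\infty$, and density of the reference sets makes them isometric embeddings (the upper bound is the reverse triangle inequality; the lower bound follows by choosing $q_{j_n}$ with $f_i(q_{j_n}) \to y'$, respectively $f(q_{j_n}) \to [y']$). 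Then
\[ \sup_{x \in [0,1]^d} \bigl\| \iota_i(f_i(x)) - \iota(f(x)) \bigr\|_\infty = \sup_{x\in [0,1]^d}\, \sup_{j\in\mathbb{N}} \bigl|\rho_i(x,q_j) - \rho(x, q_j)\bigr|, \]
which tends to $0$ by the uniform convergence $\rho_i \to \rho$ from the first step, yielding $\dist(f_i, f) \to 0$.

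The only real obstacle is arranging all the isometric embeddings to land in a common $\ell^\infty$ in a way that makes the comparison coordinatewise transparent; the key trick is to use the Kuratowski embedding built from a dense set $D$ in the \emph{domain} rather than dense sets in each varying target $Y_i$, which turns the $\dist$-comparison into a uniform comparison of the $\rho_i$'s on $[0,1]^d \times D$ and lets Arzel\`a--Ascoli do the rest.
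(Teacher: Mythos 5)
Your proof is correct, and it takes a genuinely different route from the paper. The paper's argument is ``top-down'': it first bounds $\diam(Y_i)$ and covering numbers uniformly, invokes Gromov's compactness theorem to extract a Gromov--Hausdorff convergent subsequence of the targets $Y_i$, then appeals to an embedding theorem to place the $Y_i$ and the limit $Y$ in a common $\ell^\infty$ with Hausdorff distance tending to zero, and finally runs a diagonalization on the maps themselves. Your argument is ``bottom-up'' and more elementary: by pulling back the target metrics to pseudo-metrics $\rho_i$ on the \emph{fixed} compact domain, you replace the Gromov machinery by a single application of Arzel\`a--Ascoli on $[0,1]^d\times[0,1]^d$, construct the limit $Y$ directly as a metric quotient, and then choose all the $\ell^\infty$-embeddings in a coherent way by using a Kuratowski embedding indexed by one countable dense subset of the \emph{domain} (pushed forward by each $f_i$, respectively by $f$). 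This makes the inequality $\sup_x\|\iota_i(f_i(x))-\iota(f(x))\|_\infty = \sup_{x,j}|\rho_i(x,q_j)-\rho(x,q_j)|$ literally a restriction of the uniform convergence $\rho_i\to\rho$, so no separate diagonalization or Hausdorff-distance estimate is needed. In effect you have reproved the relevant piece of Gromov compactness in a form tailored to mappings with a common domain; the trade-off is a slightly longer but more self-contained and more transparent proof, and your choice of embeddings makes the $\dist$ estimate essentially tautological rather than extracted from abstract convergence results.
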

\begin{proof}

We will first use Gromov's compactness theorem (see \cite[Theorem 7.4.15]{burago2001course}) to argue that a subsequence of the spaces $Y_i$ converge in the Gromov-Hausdorff topology. First of all, each $Y_i$ has 
\begin{equation}\label{eq:gh1}
\diam(Y_i)\leq \diam([0,1]^d)=\sqrt{d} \text{ uniformly in } i,
\end{equation}
as a $1$-Lipschitz image of the unit cube. Second of all, for every $\epsilon>0$, let $N(X,\epsilon)$ be the maximal size of an $\epsilon$-separated set in a metric space $X$. Observing that the pre-image of an $\epsilon$-separated set under a $1$-Lipschitz map is again $\epsilon$-separated, we have
\begin{equation}\label{eq:gh2}
N(Y_i,\epsilon) \leq N([0,1]^d,\epsilon) \lesssim \epsilon^{-d} \text{ uniformly in } i.
\end{equation}
The properties \eqref{eq:gh1} and \eqref{eq:gh2} of the sequence $\{Y_i\}$ imply that the sequence contains a Gromov-Hausdorff convergent subsequence, by \cite[Theorem 7.4.15]{burago2001course}. For convenience, let us continue to label this subsequence $\{Y_i\}$.

We may now find isometric embeddings $\iota_i\colon Y_i \rightarrow \ell^\infty$ and a compact subset $Y\subseteq \ell^\infty$ such that 
$$ d_{\text{Haus}}(\iota_i(Y_i), Y) \rightarrow 0,$$
where $d_{\text{Haus}}$ denotes the Hausdorff distance in $\ell^\infty$. (See \cite[Proposition 2.8]{heinonen2003geometric}.)

By a standard diagonalization argument, we may now extract a subsequence $\{f_{i_k}\}$ of our original sequence and a $1$-Lipschitz map $f\colon Q_0 \rightarrow Y$ such that
$$  \sup_{Q_0}\|\iota_{i_k}\circ f_{i_k} - f\|_\infty \rightarrow 0.$$
It follows that
$$ \dist(f_{i_k}, f) \rightarrow 0.$$
\end{proof}

We will also use the following simple fact.
\begin{lemma}\label{lem:monotone}
Let $f\colon Q_0 \rightarrow Y$ and $\phi\colon Y \rightarrow Z$ be $1$-Lipschitz maps. Then
$$ \HH^{n,m}_\infty(\phi\circ f,Q_0) \leq \HH^{n,m}_\infty(f,Q_0).$$
\end{lemma}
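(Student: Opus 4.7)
The plan is to reduce the inequality to the well-known fact that $n$-dimensional Hausdorff content is monotone under $1$-Lipschitz maps, and then to use the definition of mapping content term-by-term on a cube cover.

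First I would recall that if $\phi \colon Y \to Z$ is $1$-Lipschitz and $A \subseteq Y$, then $\HH^n_\infty(\phi(A)) \leq \HH^n_\infty(A)$. This is immediate from the definition of Hausdorff content: any cover $\{U_i\}$ of $A$ by open sets gives a cover $\{\phi(U_i) \cup V_i\}$ of $\phi(A)$ where $V_i$ are tiny open neighborhoods, and $\diam(\phi(U_i)) \leq \diam(U_i)$ since $\phi$ is $1$-Lipschitz; taking the infimum over covers of $A$ yields the bound. (Equivalently one covers $\phi(A)$ via images of a cover of $A$.)

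Next, fix any almost-disjoint dyadic cube cover $\{Q_i\}$ of $Q_0$. Applying the monotonicity recalled above to each set $f(Q_i) \subseteq Y$ and the map $\phi$, we obtain
\begin{equation*}
\HH^n_\infty\bigl((\phi \circ f)(Q_i)\bigr) = \HH^n_\infty\bigl(\phi(f(Q_i))\bigr) \leq \HH^n_\infty(f(Q_i))
\end{equation*}
for every $i$. Multiplying by $\side(Q_i)^m$ and summing over $i$ yields
\begin{equation*}
\sum_i \HH^n_\infty\bigl((\phi \circ f)(Q_i)\bigr)\, \side(Q_i)^m \leq \sum_i \HH^n_\infty(f(Q_i))\, \side(Q_i)^m.
\end{equation*}

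Finally, the left-hand side is at least $\HH^{n,m}_\infty(\phi \circ f, Q_0)$ by definition, and taking the infimum of the right-hand side over all almost-disjoint dyadic cube covers $\{Q_i\}$ of $Q_0$ gives $\HH^{n,m}_\infty(f, Q_0)$. This yields the desired inequality. There is no real obstacle here; the only point to be careful about is the monotonicity of $\HH^n_\infty$ under $1$-Lipschitz maps, but this is a standard computation directly from the definition in Subsection~\ref{subsec:content}.
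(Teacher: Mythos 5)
Your proof is correct and follows the same approach as the paper: apply monotonicity of $\HH^n_\infty$ under $1$-Lipschitz maps to each cube in a dyadic cover, then take the infimum over covers. The paper simply states this in one line without spelling out the cube-by-cube argument.
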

\begin{proof}
This follows immediately from the definition of mapping content and the fact that $$\HH^n_\infty(\phi(f(Q))) \leq \HH^n_\infty(f(Q))$$ for every cube $Q$.
\end{proof}

\begin{proof}[Proof Theorem \ref{thm:alldim}]
We begin with the first statement in the theorem: small mapping content implies close to factoring through a tree. Suppose that the first statement in the theorem were false. There would then be an $\epsilon>0$ and a sequence $f_i\colon Q_0 \rightarrow \ell^\infty$ of $1$-Lipschitz maps such that
$$ \HH^{n,m}_\infty(f_i, Q_0) \rightarrow 0 $$
but
\begin{equation}\label{eq:notclose}
\sup_{Q_0} \|f_i - g\|_\infty \geq \epsilon
\end{equation}
for all $i$ and all $1$-Lipschitz maps $g\colon Q_0 \rightarrow \ell^\infty$ with $\HH^{n,m}_\infty(g,Q_0)=0$. 

By Lemma \ref{lem:GHcompact}, there is a $1$-Lipschitz map $f$ from $Q_0$ onto a metric space $Y$ such that
$$ \dist(f_i, f)\rightarrow 0.$$
By Theorem \ref{thm:contentzero}, we have $\HH^{n,m}_\infty(f,Q_0)=0$.

Choose $i$ large such that $\dist(f_i, f)<\epsilon$, and write $Y_i=f_i(Q_0)$. There are isometric embeddings
$$ \iota\colon Y_i \rightarrow \ell^\infty$$
and
$$ \iota'\colon Y \rightarrow \ell^\infty$$
such that 
$$ \sup_{Q_0} \|\iota \circ f_i - \iota' \circ f\|_\infty < \epsilon.$$

Let $j\colon \iota(Y_i) \rightarrow Y_i \subseteq \ell^\infty$ be the inverse of $\iota$. We may extend $j$ to a $1$-Lipschitz map from $\ell^\infty$ to itself. Let $g = j \circ \iota' \circ f$. Note that, like $f$, $g$ is a $1$-Lipschitz map with $\HH^{n,m}_\infty(g,Q_0)=0$, by Lemma \ref{lem:monotone}. 

In addition, 
\begin{align*}
\sup_{Q_0}\|f_i - g\|_\infty &= \sup_{Q_0} \| j \circ \iota \circ f_i - j \circ \iota' \circ f\|_\infty\\
&\leq \sup_{Q_0} \| \iota \circ f_i - \iota' \circ f\|_\infty\\
&<\epsilon
\end{align*}
This contradicts \eqref{eq:notclose} and so completes the proof of the first half of Theorem \ref{thm:alldim}.

We now consider the second, converse statement. Suppose that the second statement in the theorem were false. There would then be an $\epsilon>0$ and two sequences $f_i, g_i \colon Q_0 \rightarrow \ell^\infty$ of $1$-Lipschitz maps such that
\begin{itemize}
\item $\sup_{Q_0} \|f_i - g_i\|_\infty \rightarrow 0$,
\item $\HH^{n,m}(g_i,Q_0)=0$ for each $i$, and
\item $\HH^{n,m}_\infty(f_i,Q_0)\geq \epsilon$ for each $i$ and some fixed $\epsilon>0$.
\end{itemize}
By passing to a subsequence, we may again assume that there is a $1$-Lipschitz map $f\colon Q_0 \rightarrow Y$ such that $\dist(f_i,f)\rightarrow 0$. 

There are isometric embeddings $\iota_i\colon f_i(Q_0) \rightarrow \ell^\infty$, $\iota\colon f(Q_0)\rightarrow\ell^\infty$ such that
$$ \sup_{Q_0} \| \iota_i\circ f_i - \iota\circ f\|.$$
As before, we may extend $\iota_i$ and $\iota$ to be $1$-Lipschitz maps defined on all of $\ell_\infty$. Let $j$ be the inverse of $\iota|_{f(Q_0)}$, extended to be a $1$-Lipschitz map on $\ell_\infty$.

It follows that
\begin{align*}
\sup_{Q_0}\|f-j\circ \iota_i \circ g_i\|_\infty &= \sup_{Q_0}\|j\circ \iota \circ f - j\circ \iota_i \circ g_i\|_\infty\\
&\leq \sup_{Q_0}\|\iota \circ f - \iota_i \circ g_i\|_\infty\\
&\leq \sup_{Q_0}\|\iota \circ f - \iota_i \circ f_i\|_\infty + \sup_{Q_0}\|\iota_i \circ f_i - \iota_i \circ g_i\|_\infty\\
&\leq \sup_{Q_0}\|\iota \circ f - \iota_i \circ f_i\|_\infty + \sup_{Q_0}\| f_i - g_i\|_\infty\\
&\rightarrow 0
\end{align*}
Therefore, $\dist(f, j\circ\iota_i\circ g_i)\rightarrow 0$. Since $\HH^{n,m}_\infty(g_i, Q_0)=0$ for each $i$, the same holds for $\iota_i\circ g_i$ by Lemma \ref{lem:monotone}. It follows from Theorem \ref{thm:contentzero} that $\HH^{n,m}_\infty(f,Q_0)=0$. 

Again by Theorem \ref{thm:contentzero}, we must therefore have $\HH^{n,m}_\infty(f_i, Q_0)\rightarrow 0$, but this contradicts one of the defining properties of $f_i$.
\end{proof}

\begin{proof}[Proof of Theorem \ref{thm:main}]
The theorem now follows immediately from Theorem \ref{thm:alldim} and Theorem \ref{thm:EGH}.
\end{proof}

\bibliography{trees}{}
\bibliographystyle{plain}

\end{document}